\newtheorem{theorem}{Theorem}
\newtheorem*{remark}{Remark}
\newtheorem{lemma}{Lemma}
\DeclareMathOperator*{\argmin}{arg\,min}
\DeclareMathOperator*{\esssup}{ess\,sup}
\begin{document}
%
\title{Quickest Detection of Moving Anomalies in Sensor Networks}
%
%
%


\author{Georgios Rovatsos, ~\IEEEmembership{Student Member,~IEEE},\\ George V. Moustakides, ~\IEEEmembership{Fellow, ~IEEE}, \\ and Venugopal V. Veeravalli, ~\IEEEmembership{Fellow, ~IEEE}
\thanks{This work was supported in part by the Army Research Laboratory under Cooperative Agreement W911NF-17-2-0196 (IoBT CRA), and in part by the National Science Foundation (NSF) under grant CCF 16-18658 and CIF 15-14245, through the University of Illinois at Urbana-Champaign, and grant CIF 15-13373, through Rutgers University.

This work was presented in part in the 2019 Asilomar Conference on Signals, Systems, and Computers \cite{Rovatsos_asilomar:2019} and in the 2020 International Symposium on Information Theory \cite{Rovatsos_ISIT:2020}.}}
%
\maketitle

\maketitle

\begin{abstract}
The problem of sequentially detecting a moving anomaly which affects different parts of a sensor network with time is studied. Each network sensor is characterized by a non-anomalous and anomalous distribution, governing the generation of sensor data. Initially, the observations of each sensor are generated according to the corresponding non-anomalous distribution. After some \textit{unknown} but \textit{deterministic} time instant, a moving anomaly emerges, affecting different sets of sensors as time progresses. As a result, the observations of the affected sensors are generated according to the corresponding anomalous distribution. Our goal is to design a stopping procedure to detect the emergence of the anomaly as quickly as possible, subject to constraints on the frequency of \textit{false alarms}. The problem is studied in a quickest change detection framework where it is assumed that the evolution of the anomaly is \textit{unknown} but \textit{deterministic}. To this end, we propose a modification of Lorden's worst average detection delay metric to account for the trajectory of the anomaly that maximizes the detection delay of a candidate detection procedure. We establish that a \textit{Cumulative Sum}-type test solves the resulting sequential detection problem exactly when the sensors are \textit{homogeneous}. For the case of \textit{heterogeneous} sensors, the proposed detection scheme can be modified to provide a first-order asymptotically optimal algorithm. We conclude by presenting numerical simulations to validate our theoretical analysis.
\end{abstract}

\begin{IEEEkeywords}
Quickest change detection, M-CUSUM test, moving anomaly, worst-path approach, optimal test.
\end{IEEEkeywords}



\IEEEpeerreviewmaketitle

\section{Introduction}
\label{sec:intro}
In quickest change detection (QCD) \cite{tart-niki-bass-2014,poor-hadj-qcd-book-2009,veer-bane-elsevierbook-2013}, a sequentially observed time series undergoes a change in the underlying probability distribution at some unknown time instant. The goal is to design detection procedures, in the form of \textit{stopping times}, to detect this abrupt change as quickly as possible, subject to \textit{false alarm} (FA) constraints. It is of high importance that the proposed detection procedures are not solely implementable, but also offer strong theoretical guarantees with respect to defined delay-FA trade-off formulations.

In the classical, single-sensor QCD setting, the observations are initially \textit{independent and identically distributed} (i.i.d.) according to a known non-anomalous distribution. After some unknown time instant, which will be referred to as the \textit{changepoint}, a persistent change takes place and thereafter data is generated according to a known anomalous distribution. This model, referred to in the QCD literature as the \textit{i.i.d. model}, has been extensively studied under two frameworks that arise according to the underlying assumptions on the changepoint: i) the \textit{Bayesian} setting \cite{Shiryaev:1963,tart_veera:2005}, initially studied by Shiryaev, where the changepoint is modeled as a random variable of known probability distribution and the goal is to minimize an average detection delay, subject to constraints on the FA probability; ii) the \textit{minimax} setting \cite{lorden:1971,Pollak:1985,moustakides:1986,lai-ieeetit-1998}, where the changepoint is assumed to be \textit{unknown} but \textit{deterministic} and the goal is to minimize a worst-case (with respect to the changepoint) average detection delay, subject to a constraint on the \textit{mean time to false alarm} (MTFA).

In the case of multisensor networks, the theory of QCD has been widely employed to provide solutions to a variety of detection problems of interest. In such settings, the spatial evolution of the anomaly with time plays a crucial role, since different QCD problems with different solutions arise according the way sensors are affected by the anomaly. The simplest case corresponds to the anomaly persistently affecting a fixed set of sensors, the identity of which is known to the decision maker, after the changepoint. This problem is a trivial extension of the classical single-sensor QCD setting, hence, the algorithms in \cite{lorden:1971,Pollak:1985,moustakides:1986,lai-ieeetit-1998,Shiryaev:1963,tart_veera:2005} can be directly applied to provide performance guarantees. A significantly more complicated problem instance arises if we assume that the decision maker has no knowledge of the identity of the affected nodes. This problem has been extensively studied in the literature under the minimax setting \cite{tartakovsky2004change,1677904,mei2010efficient,Mei:2011,xie2013sequential,7890469,fellouris2016se}. Generalizations of these two aforementioned settings consider the case that the onset of the anomaly is perceived at different time instants across sensors \cite{Zou2017,hadjiliadis2009one,raghavan2010quickest,Ludkovski2012BayesianQD,Zou_sens:2018,Zhang:2018,Moustakides:2016,Rovatsos2:2016,7953065}. It is crucial to note that in the sensor network problems studied thus far, the core assumption that the anomaly persistently affects each sensor is made.

In this work, we study the problem of sequentially detecting a moving anomaly under Lorden's minimax framework \cite{lorden:1971}. In the moving anomaly QCD setting, it is assumed that different sets of nodes are affected by the anomaly as time progresses, and that the anomalous nodes are unknown to the decision maker. As a result, the anomaly does not affect any specific sensor persistently, but is persistent in the network as a whole. The problem was initially studied in \cite{Rovatsos_ISIT:2019,Rovatsos_SQA:2019}, where it was assumed that the anomaly evolves according to a \textit{discrete time Markov chain} and is of fixed size. In this paper, we lift the Markov assumption and assume that the trajectory of the anomaly is \textit{unknown} and \textit{deterministic}. To account for the lack of a specific model for the anomaly path, we modify Lorden's detection delay \cite{lorden:1971} to obtain a worst-path detection delay, and frame Lorden's QCD problem with the newly introduced delay metric. In the case of a network comprised of \textit{homogeneous} sensors, which share a common non-anomalous and a common anomalous distribution, we establish that a \textit{Cumulative Sum} (CUSUM)-type \cite{Page:1954} test that detects a transition to a mixture of distributions, each induced on the observations according to the identity of the anomalous nodes, is exactly optimal. Furthermore, we show that in the general case of \textit{heterogeneous} sensors the proposed test can be modified to provide a first-order asymptotically optimal solution.

The remainder of this paper is organized as follows. In Sec. \ref{sec:model}, we introduce necessary notation, describe the underlying statistical model for the observed process, and present the delay and FA metrics to be used along with the optimization problem to be solved. In Sec. \ref{sec:test_structure}, we introduce our proposed detection scheme. In Sec. \ref{sec:homogeneous}, we present the optimality of the proposed test for the case of homogeneous sensors. In Sec. \ref{sec:heterogeneous_network}, we show how to choose the parameters of the proposed detection procedure to derive a test that is first-order asymptotically optimal for a general, heterogeneous network. Finally, in Sec. \ref{sec:nums}, we conclude by providing simulation results to numerically validate the use of our proposed detection procedure.
\section{Problem Model}
\label{sec:model}
In this section, we present the statistical model that governs the data generated by the sensor network, as well as pose our QCD problem in a delay-FA optimization framework after introducing the worst-trajectory delay metric. We begin by introducing some necessary notation. Our convention in this work is that for any sequence $\{\alpha[k]\}_{k=1}^\infty$, and $k_2 > k_1$ we have that $\prod_{j=k_2}^{k_1} a[j] \triangleq 1$ and $\sum_{j=k_2}^{k_1} a[j] \triangleq 0$. Furthermore, for any sequence $\{\alpha[k]\}_{k=1}^\infty$, $\alpha[k_1,k_2] \triangleq \left[ \alpha[k_1],\ldots \alpha[k_2] \right]^\top$ denotes the samples from time $k_1$ to $k_2$. For a set $E$, $|E|$ denotes the number of elements in the set. Denote by $[L] \triangleq \{1,\dots,L\}$ a set of $L \geq 1$ sensors that comprise a sensor network monitored by a \textit{centralized} decision maker. Let $\{\bm{X}[k]\}_{k=1}^\infty$ denote the sequence of observations generated by the sensor network, where $\bm{X}[k] \triangleq [X_{1}[k],\dots,X_{L}[k]]^\top$ denotes the observation vector at time $k$ and $X_{\ell}[k] \in \mathbb{R}$ denotes the measurement obtained by sensor $\ell \in [L]$ at time $k$. Define by $\mathscr{F} \triangleq \{\mathscr{F}_k\}_{k=1}^\infty$ the filtration generated by the observation process, where $\mathscr{F}_k = \sigma(\bm{X}[1,k])$ denotes the $\sigma$-algebra generated by $\bm{X}[1,k]$. Furthermore, for $K \geq 0$ we use $\| \bm{x} \|_K$ to denote the $l_K$ norm of vector $\bm{x}$. Finally, for functions $f : \mathbb{R} \mapsto \mathbb{R}$, $g : \mathbb{R} \mapsto \mathbb{R}$ we have that $f(x) \sim g(x)$ denotes that $g(x) = f(x)(1+o(1))$ as  $x \rightarrow \infty$, where $o(1) \rightarrow \infty$ as $x \rightarrow \infty$.
\subsection{Observation Model}
Denote by $g_\ell(x)$, $f_\ell(x)$ the non-anomalous and anomalous \textit{probability density functions} (pdfs) at sensor $\ell \in [L]$, respectively. We assume that at each sensor the corresponding non-anomalous and anomalous distributions are different and that all data-generating distributions are known to the decision maker. Initially, all sensors generate data i.i.d. according to the non-anomalous mode, and observations are assumed to be independent across sensors. As a result, the joint pdf of $\bm{X}[k]$ is initially given by 
\begin{align}
\label{eq:pre_change_join}
g(\bm{X}[k]) \triangleq \prod\limits_{\ell=1}^L g_\ell(X_{\ell}[k]).
\end{align}

After some \textit{unknown} and \textit{deterministic changepoint} $\nu \geq 0$, a physical event leads to the emergence of a moving anomaly in the network. The anomaly moves around the network, affecting different sets of size $1 \leq m \leq L$ as time progresses. It is assumed that $m$ is constant and known to the decision maker. Define the process $\bm{S} \triangleq \{\bm{S}[k]\}_{k=1}^\infty$, where $\bm{S}[k]$ denotes the $m$-dimensional vector containing the indices of the anomalous nodes at time $k$. Note that for notational convenience, $\bm{S}[k]$ is defined for all $k\geq 1$ and not simply for $k > \nu$. We denote by $\mathcal{E}(L,m) \triangleq \mathcal{E} \triangleq \left\{\bm{E}_j\, \big|\, 1 \leq j \leq \binom Lm \right\}$ the set of all distinct possible vector-values that $\bm{S}[k]$ can take (WLOG we assume that the components of each vector are ordered to provide a unique vector per anomaly placement). Nodes affected by the anomaly generate observations according to the anomalous mode. In particular, for $k>\nu$, we have that conditioned on $\bm{S}$ the joint pdf of $\bm{X}[k]$ is given by
\begin{align}
p_{\bm{S}[k]}(\bm{X}[k])\triangleq   \left(\prod\limits_{\ell \in \bm{S}[k]} f_\ell(X_{\ell}[k])\right) \cdot\left(\prod\limits_{\ell \notin \bm{S}[k]} g_\ell(X_{\ell}[k])\right),
\end{align}
where for $\bm{E} \in \mathcal{E}$, $p_{\bm{E}}(\bm{x})$ denotes the joint pdf induced on a vector observation when the anomalous nodes are the ones contained in $\bm{E}$. We also assume that the observations are independent across time, conditioned on the changepoint. As a result, conditioned on $\nu$ and $\bm{S}$ the complete statistical model is the following: 
\begin{align}
\label{eq:distribution}
\bm{X}[k] \sim\left\{
\begin{array}{ll}
    g(\bm{X}[k]) &1\leq k \leq \nu \\
  p_{\bm{S}[k]}(\bm{X}[k])  & k > \nu.\\
\end{array}
\right. 
\end{align}
Furthermore, note that the aforementioned moving anomaly QCD problem can also be posed as the following dynamic composite hypothesis testing problem: at each time instant $k$, decide between the hypotheses
\begin{align}
\begin{split}
\label{eq:hypo}
&H^k_{1,\bm{S}}: \nu < k \text{ and anomaly evolves according to } \bm{S} \\
&H^k_0: \nu \geq k.
\end{split}
\end{align}  
The likelihood ratio corresponding to \eqref{eq:hypo} is then given by
\begin{align}
\label{eq:like_ratio}
\Gamma_{\bm{S}}(k,\nu) \triangleq \prod_{j=\nu+1}^k \left( \prod\limits_{\ell \, \in \, \bm{S}[j]} \frac{f_\ell(X_\ell[j])}{g_\ell(X_\ell[j])} \right) = 
\prod_{j=\nu+1}^k \Gamma_{\bm{S}}(j,j-1).
\end{align}
\subsection{Delay-FA Trade-off Formulation}
In this work, the goal is to design a detection procedure in the form of a \textit{stopping time} to detect the abrupt change in distribution detailed in \eqref{eq:distribution}. A stopping time $\tau$ \cite{tart-niki-bass-2014,poor-hadj-qcd-book-2009,veer-bane-elsevierbook-2013} adapted to $\mathscr{F}$ is a positive random variable which satisfies $\{ \tau \leq k \} \in \mathscr{F}_k$ for all $k \geq 1$, i.e., the decision to raise an alarm at time $k$ is determined only by the observations up to that point. An efficient stopping procedure offers quick detection by guaranteeing a sufficiently low frequency of false alarms. To frame this trade-off mathematically, we employ a modified version of Lorden's delay-FA formulation \cite{lorden:1971}. In particular, since the anomaly trajectory process $\bm{S}$ is assumed to be unknown, we modify Lorden's delay metric to evaluate candidate detection schemes according to the anomaly path that maximizes the expected detection delay. In particular, denote by $\mathbb{E}_\nu^{\bm{S}}[\cdot]$ the expectation when the changepoint is equal to $\nu$ and the trajectory of the anomaly is specified by $\bm{S}$. Then, for any stopping rule $\tau$ adapted to $\mathscr{F}$ consider the following modification of Lorden's worst average detection delay ($\mathrm{WADD}$) metric:
\begin{align}
\label{eq:delay_metric}
\mathrm{WADD}(\tau) = \sup_{\bm{S}}  \sup_{\nu \geq 0} \esssup \mathbb{E}_\nu^{\bm{S}} \left[\tau -\nu  |\tau > \nu, \mathscr{F}_{\nu } \right],
\end{align}
where the convention that $\mathbb{E}_\nu^{\bm{S}} \left[\tau -\nu |\tau > \nu , \mathscr{F}_{\nu } \right] \triangleq 1$ when $\mathbb{P}_\nu^{\bm{S}}(\tau > \nu)=0$ is used. Note that an additional sup is used to account for the trajectory of the anomaly that maximizes the detection delay of $\tau$. Denote by $\mathbb{E}_\infty[\cdot]$ the expectation when no anomaly is present. To quantify the frequency of FA events we use the \textit{mean time to false alarm} (MTFA), denoted by $\mathbb{E}_\infty[\tau]$ for stopping time $\tau$. For $\gamma >1$ a pre-determined constant, define the class of stopping times
\begin{align}
\label{eq:c_gamma}
\mathcal{C_\gamma} \triangleq \{\tau : \mathbb{E}_\infty[\tau] \geq \gamma\}.
\end{align} 
Our goal then is to design a stopping time $\tau$ to solve the following stochastic optimization problem:
\begin{equation}
\label{eq:optimization}\
\begin{aligned}
& \underset{\tau}{\text{min}}
& & \mathrm{WADD}(\tau) \\
& \text{s.t.}
& & \tau \in \mathcal{C_\gamma}.
\end{aligned}
\end{equation}
\subsection{Randomized Anomaly Allocation Model}
Before proceeding to the presentation of our main theoretical results, it is necessary to introduce another statistical model that plays an important role in the mathematical analysis, as well as in the intuitive interpretation of the results. In particular, consider an alternate setting to that of \eqref{eq:distribution}, where at each time instant after the changepoint the $m$ anomalous nodes are chosen randomly. To this end, denote by $\bm{\alpha} = \left\{\alpha_{\bm{E}} : \bm{E} \in \mathcal{E}\right\} \in \mathcal{A}$ the \textit{probability mass function} (pmf) containing the probabilities that each of the vectors in $\mathcal{E}$ is chosen as the vector of anomalous nodes. I.e., at each time instant $k$ the probability that the $m$ anomalous nodes are chosen to be in $\bm{E}$ is given by $\alpha_{\bm{E}}$, and the set of anomalous nodes are picked i.i.d. across time. Here, $\mathcal{A}$ denotes the simplex of all probability vectors of dimension $|\mathcal{E}|$. When at each time instant after the changepoint the anomalous nodes are placed i.i.d. randomly according to $\bm{\alpha}$, we have that the induced pdf after the changepoint is a mixture of pdfs given by
\begin{align}
\overline{p}_{\bm{\alpha}}(\bm{X}[k])\triangleq \sum\limits_{\bm{E}\, \in \, \mathcal{E}} \alpha_{\bm{E}} p_{\bm{E}}(\bm{X}[k]).
\end{align}
As a result, the complete observation model for the case of a randomized anomaly allocation according to pmf $\bm{\alpha}$ is the following
\begin{align}
\label{eq:eq:random_stat_model}
\bm{X}[k] \sim\left\{
\begin{array}{ll}
    g(\bm{X}[k])  & 1\leq k \leq \nu \\
  \overline{p}_{\bm{\alpha}}(\bm{X}[k])& k > \nu.\\
\end{array}
\right.
\end{align}
Similarly to \eqref{eq:distribution}, we can pose the following dynamic composite hypothesis testing problem corresponding to \eqref{eq:eq:random_stat_model}: at each time $k$ choose between the hypotheses
\begin{align}
\begin{split}
\label{eq:hypo_2}
&\bar{H}^k_{1,\bm{\alpha}}: \nu < k \text{ and anomaly placed randomly according to } \bm{\alpha} \\
&\bar{H}^k_0: \nu \geq k.
\end{split}
\end{align}
The likelihood ratio corresponding to \eqref{eq:hypo_2} is then given by
\begin{align}
\label{eq:ave_like_ratio}
\mathcal{L}_{\bm{\alpha}}(k,\nu) &\triangleq  
\prod_{j=\nu+1}^k \frac{\overline{p}_{\bm{\alpha}}(\bm{X}[j])}{g(\bm{X}[j])}  
= \prod_{j=\nu+1}^k \left( \sum_{\bm{E} \,\in\, \mathcal{E}} \alpha_{\bm{E}} \prod\limits_{\ell \,\in \, \bm{E}}\frac{ f_\ell(X_{\ell}[j])}{g_\ell(X_\ell[j])} \right) 
= \prod_{j=\nu}^k  \mathcal{L}(j,j-1).
\end{align}
Furthermore, consider the Kullback-Leibler (KL) number between the non-anomalous and anomalous distributions in \eqref{eq:eq:random_stat_model} given by
\begin{align}
\label{eq:KL}
I_{\bm{\alpha}}\triangleq \overline{\mathbb{E}}^{\bm{\alpha}}_{0} \left[\log \frac{\overline{p}_{\bm{\alpha}}(\bm{X}[1])}{g(\bm{X}[1])} \right],
\end{align}
where $\overline{\mathbb{E}}^{\bm{\alpha}}_{\nu}[\cdot]$ denotes the expectation when the underlying statistical model is that of \eqref{eq:eq:random_stat_model} with changepoint being equal to $\nu$ and the anomaly placed randomly according to $\bm{\alpha}$. 

Note that the model in eq. \eqref{eq:eq:random_stat_model} characterizes a different QCD problem compared to the one described in eqs. \eqref{eq:distribution} - \eqref{eq:optimization}, one in which the non-anomalous and anomalous pdfs are completely specified. This QCD problem is associated with a corresponding detection delay. In particular, for stopping time $\tau$, define the detection delay corresponding to the QCD problem detailed in \eqref{eq:eq:random_stat_model} by
\begin{align}
\label{eq:random_delay}
\overline{\mathrm{WADD}}_{\bm{\alpha}}(\tau) =  \sup_{\nu \geq 0} \esssup \overline{\mathbb{E}}^{\bm{\alpha}}_{\nu}[\tau - \nu |  \tau > \nu,\mathscr{F}_{\nu }].
\end{align}
Here, we also use the convention that $\overline{\mathbb{E}}^{\bm{\alpha}}_{\nu}[\tau - \nu |  \tau > \nu,\mathscr{F}_{\nu }] \triangleq 1$ when $\overline{\mathbb{P}}^{\bm{\alpha}}_\nu(\tau > \nu) =0$. Since both the non-anomalous and anomalous joint pdfs for the QCD problem presented in \eqref{eq:eq:random_stat_model} - \eqref{eq:random_delay} are completely specified, the classical CUSUM test studied in \cite{lorden:1971,Pollak:1985,moustakides:1986,lai-ieeetit-1998} can be directly applied to solve this QCD problem exactly \cite{moustakides:1986}. In the remainder of the paper, we show that solving the QCD problem in \eqref{eq:eq:random_stat_model} - \eqref{eq:random_delay} for a specific choice of $\bm{\alpha}$, which depends in the data generating distributions of the sensors, can lead to the solution of our QCD problem of interest.
\section{Universal Asymptotic Lower Bound on WADD}
s
\section{Proposed Detection Algorithm}
\label{sec:test_structure}
As discussed in Sec. \ref{sec:model}, in this work we establish that the solution to \eqref{eq:distribution} - \eqref{eq:optimization} is the solution to the QCD problem outlined in \eqref{eq:eq:random_stat_model} - \eqref{eq:random_delay} for a specific choice of $\bm{\alpha}$. To this end, we will focus on the analysis of the CUSUM test corresponding to the QCD problem in \eqref{eq:eq:random_stat_model} - \eqref{eq:random_delay}. In particular, for $\bm{\lambda} \in \mathcal{A}$, consider the following Mixture-CUSUM (M-CUSUM) test statistic 
\begin{align}
\label{eq:test_stat}
W_{\bm{\lambda}}[k] \triangleq \max\limits_{1\leq i \leq k} \mathcal{L}_{\bm{\lambda}}(k,i-1),
\end{align}
with the corresponding stopping time 
\begin{align}
\label{eq:stop_time}
\tau_W (\bm{\lambda}, b) \triangleq  \inf\left\{ k \geq 1 : W_{\bm{\lambda}}[k] \geq e^b \right\},
\end{align}
where $b>0$ a constant chosen so that the stopping time satisfies the FA constraint in \eqref{eq:c_gamma}. It can be easily established, (see, e.g, \cite{veer-bane-elsevierbook-2013}) that the test statistic in \eqref{eq:test_stat} can be computed recursively through the recursion
\begin{align}
\label{eq:test_rec}
W_{\bm{\lambda}}[k] =  \max\{W_{\bm{\lambda}}[k-1] ,1 \}\mathcal{L}_{\bm{\lambda}}(k,k-1) ,
\end{align}
where $W_{\bm \lambda}[0] \triangleq 0$ for any $\bm{\lambda} \in \mathcal{A}$. Note that the M-CUSUM test presented in eqs. \eqref{eq:test_stat} - \eqref{eq:test_rec} is the exact solution to the QCD problem detailed in \eqref{eq:eq:random_stat_model} - \eqref{eq:random_delay} when $\bm{\alpha} = \bm{\lambda}$, if $b$ is chosen such that $\mathbb{E}[\tau_W (\bm{\lambda}, b)] = \gamma$ \cite{moustakides:1986}. In the remainder of the paper, we establish that by choosing $\bm{\lambda}$ accordingly the M-CUSUM procedure is also an exact solution to \eqref{eq:optimization} when the network is comprised of homogeneous sensors, as well as first-order asymptotically optimal for the general heterogeneous network case. Our analysis is based on relating the two QCD models presented in Sec. \ref{sec:model} and exploiting tools used for the analysis of the CUSUM test in \cite{moustakides:1986}, 
\cite{lai-ieeetit-1998}. We begin by presenting an important theorem relating the detection delay metrics \eqref{eq:delay_metric}, \eqref{eq:random_delay} introduced in Sec. \ref{sec:model}.
\begin{theorem}
\label{ineqs_lemma}
Let $\gamma > 1$ and $\bm{\alpha} \in \mathcal{A}$. For the M-CUSUM test introduced in eqs. \eqref{eq:test_stat} - \eqref{eq:test_rec} with $b$ chosen such that $\mathbb{E}_\infty[\tau_W(\bm{\alpha}, b)] = \gamma$ we have that 
\begin{align}
\label{eq:ineqs}
\mathrm{WADD}(\tau_W(\bm{\alpha}, b)) \geq \inf\limits_{\tau \in C_\gamma}\mathrm{WADD}(\tau) \geq   \overline{\mathrm{WADD}}_{\bm{\alpha}}(\tau_W(\bm{\alpha}, b)) .
\end{align}
\end{theorem}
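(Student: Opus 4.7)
The plan is to establish the two inequalities separately and by rather different means. The left inequality is essentially trivial, and the right inequality is the substantive content, which I would prove by chaining two facts: (i) the worst-trajectory delay dominates the expected delay under any randomized trajectory law; and (ii) among tests meeting the MTFA constraint, the M-CUSUM tuned to $\bm{\alpha}$ is exactly optimal for the completely specified QCD problem \eqref{eq:eq:random_stat_model}--\eqref{eq:random_delay}, by Moustakides' classical result.

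For the left inequality, I would note that by the choice of $b$ we have $\mathbb{E}_\infty[\tau_W(\bm{\alpha},b)]=\gamma$, so $\tau_W(\bm{\alpha},b)\in\mathcal{C}_\gamma$. Hence $\mathrm{WADD}(\tau_W(\bm{\alpha},b))\geq \inf_{\tau\in\mathcal{C}_\gamma}\mathrm{WADD}(\tau)$ by definition of the infimum. No additional work is needed.

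For the right inequality, I fix an arbitrary $\tau\in\mathcal{C}_\gamma$ and aim to show
\begin{align*}
\mathrm{WADD}(\tau)\;\geq\;\overline{\mathrm{WADD}}_{\bm{\alpha}}(\tau)\;\geq\;\overline{\mathrm{WADD}}_{\bm{\alpha}}(\tau_W(\bm{\alpha},b)).
\end{align*}
The first of these is the worst-case-dominates-average step. The randomized allocation model \eqref{eq:eq:random_stat_model} can be realized as a two-stage experiment: first draw an i.i.d.\ trajectory $\bm{S}$ with $\bm{S}[k]\sim\bm{\alpha}$, then generate $\bm{X}$ under $\mathbb{P}^{\bm{S}}_\nu$. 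Thus $\overline{\mathbb{P}}^{\bm{\alpha}}_\nu$ is a mixture $\int \mathbb{P}^{\bm{S}}_\nu\, d\mu(\bm{S})$, where $\mu=\bm{\alpha}^{\otimes\infty}$. Since $\tau$ is $\mathscr{F}$-adapted, the event $\{\tau>\nu\}$ lies in $\mathscr{F}_\nu$, which depends only on pre-change data and has identical law under every measure in the mixture. For any $A\in\mathscr{F}_\nu$ with positive probability on $\{\tau>\nu\}$, linearity of the mixture gives
\begin{align*}
\overline{\mathbb{E}}^{\bm{\alpha}}_\nu\!\left[(\tau-\nu)\,\big|\,A\cap\{\tau>\nu\}\right]
=\int \mathbb{E}^{\bm{S}}_\nu\!\left[(\tau-\nu)\,\big|\,A\cap\{\tau>\nu\}\right]\,d\mu(\bm{S})
\;\leq\;\sup_{\bm{S}} \mathbb{E}^{\bm{S}}_\nu\!\left[(\tau-\nu)\,\big|\,A\cap\{\tau>\nu\}\right].
\end{align*}
Taking the essential supremum over $A\in\mathscr{F}_\nu$ on the right and then the $\sup_{\nu\geq 0}$ on both sides yields $\overline{\mathrm{WADD}}_{\bm{\alpha}}(\tau)\leq \mathrm{WADD}(\tau)$.

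For the second inequality $\overline{\mathrm{WADD}}_{\bm{\alpha}}(\tau)\geq \overline{\mathrm{WADD}}_{\bm{\alpha}}(\tau_W(\bm{\alpha},b))$, I would invoke the fact that $g$ and $\overline{p}_{\bm{\alpha}}$ are fully specified pre- and post-change densities, so the QCD problem in \eqref{eq:eq:random_stat_model}--\eqref{eq:random_delay} fits exactly the classical Lorden framework. By Moustakides' theorem, the CUSUM test based on the log-likelihood ratio $\log\mathcal{L}_{\bm{\alpha}}$ with threshold tuned to achieve MTFA $=\gamma$ is exactly optimal over $\mathcal{C}_\gamma$; this CUSUM is precisely $\tau_W(\bm{\alpha},b)$ by \eqref{eq:test_stat}--\eqref{eq:test_rec}. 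Therefore the inequality holds for every $\tau\in\mathcal{C}_\gamma$. Combining with the mixture-domination bound and taking the infimum over $\tau\in\mathcal{C}_\gamma$ completes the proof.

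The main obstacle, and the step requiring the most care, is handling the essential supremum on $\mathscr{F}_\nu$ when passing from the mixture identity to the delay metrics: one has to be sure that the same pre-change law across $\bm{S}$ allows conditioning on the same $\mathscr{F}_\nu$-events, and that interchange of $\sup_{\bm{S}}$ with the integral over trajectories is legitimate. Everything else is a direct application of established CUSUM optimality and definitions.
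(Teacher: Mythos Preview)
Your proposal is correct and takes a genuinely different, more streamlined route than the paper. The paper does \emph{not} establish the intermediate inequality $\mathrm{WADD}(\tau)\ge\overline{\mathrm{WADD}}_{\bm{\alpha}}(\tau)$ and then invoke Moustakides' theorem as a black box. Instead, it essentially re-runs Moustakides' proof in the presence of the extra $\sup_{\bm S}$: it truncates $\tau$ to $\tau^{(N)}$, changes measure to $\mathbb{P}_\infty$, multiplies by $(1-W_{\bm\alpha}[\nu])^+$, sums over $\nu$, and then shows term-by-term that the $\sup_{\bm S}$ in the resulting numerator can be replaced by the mixture likelihood $\mathcal{L}_{\bm\alpha}$ (because under $\mathbb{P}_\infty$ the dependence on $\bm S[n]$ factors out and a sup dominates any convex combination). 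Only after this reduction does it apply Theorem~1 of \cite{moustakides:1986} to the ratio of sums.

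Your decomposition is cleaner and more conceptual: the ``worst path dominates any randomized path law'' step plus CUSUM optimality for the fully specified problem \eqref{eq:eq:random_stat_model}--\eqref{eq:random_delay}. What it buys is modularity and brevity; what the paper's approach buys is self-containment (no Fubini/joint-measurability issues for the mixture identity on conditional expectations). Your acknowledged obstacle---interchanging the $\esssup$ on $\mathscr{F}_\nu$ with the $\sup_{\bm S}$---is genuinely the only delicate point. It goes through because all measures $\mathbb{P}^{\bm S}_\nu$ and $\overline{\mathbb{P}}^{\bm\alpha}_\nu$ agree on $\mathscr{F}_\nu$ (so the null sets and the conditioning event $\{\tau>\nu\}$ are common), and because for each fixed $\bm S$ the conditional expectation is a.s.\ bounded by its own $\esssup$, hence by $\mathrm{WADD}(\tau)$; integrating over $\bm S$ and then taking $\esssup$ preserves the bound. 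If you want to bypass the joint-measurability technicality entirely, you can first truncate to $\tau^{(N)}$ (where only finitely many coordinates of $\bm S$ matter, so the mixture is a finite sum) and then pass to the limit via monotone convergence, exactly as the paper does for its own argument.
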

\begin{proof}
The proof of the theorem is based on Lemmas \ref{thm:lemma_trunc} and \ref{converge} which are introduced and proved in the Appendix. The complete proof follows the analysis in \cite{moustakides:1986} and is provided in the Appendix.
\end{proof}
\begin{remark}
In view of Lemma \ref{thm:lemma_trunc}, we will WLOG be considering stopping times $\tau$ satisfying $\mathbb{E}_\infty[\tau] <\infty$, since any stopping time that does not satisfy this condition can be truncated to provide a smaller detection delay while at the same time satisfying the FA constraint.
\end{remark}
\section{Homogeneous Sensor Network Case}
\label{sec:homogeneous}
In this section, we consider the case of a homogeneous sensor network, i.e., a network where $g_\ell(x) \triangleq g(x)$ and $f_\ell(x) \triangleq f(x)$ for all $\ell \in [L]$, $x \in \mathbb{R}$ (note that $g(x)$ denotes the common marginal non-anomalous pdf while $g(\bm{x})$ is used to denote the joint pdf under $\mathbb{P}_\infty(\cdot)$). Since the network is symmetric, an intuitive weight choice is to choose all the weights in the M-CUSUM of Sec. \ref{sec:test_structure} test to be equal. This then implies that by the symmetry of the statistical model, as well as the resulting symmetry of the detection procedure with respect to the placement of the anomaly, placing the anomaly randomly or with the worst-path approach will not lead to a different detection delay. In particular, we have the following lemma:
\begin{lemma}
\label{symmetry_lemma}
Consider a homogeneous sensor network where $g_\ell(x) \triangleq g(x)$ and $f_\ell(x) \triangleq f(x)$ for all $\ell \in [L]$, $x \in \mathbb{R}$. Let $\bm{\lambda}_U \triangleq \left[\binom Lm, \ldots, \binom Lm \right]^\top$ the uniform M-CUSUM weights vector. For any threshold $b>0$ and any $\bm{\alpha} \in \mathcal{A}$ we have that
\begin{align}
\mathrm{WADD}(\tau_W(\bm{\lambda}_U, b)) = \overline{\mathrm{WADD}}_{\bm{\alpha}}(\tau_W(\bm{\lambda}_U, b)).
\end{align}
\end{lemma}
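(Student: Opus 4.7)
\medskip
\noindent\textbf{Proof proposal.} The plan is to exploit the interaction between two symmetries: (i) the permutation invariance of the M-CUSUM increment $\mathcal{L}_{\bm{\lambda}_U}(k,k-1)$ induced by the uniform weight choice, and (ii) the exchangeability of the homogeneous observation model under relabellings of the sensors. Together these will imply that the entire process $\{W_{\bm{\lambda}_U}[k]\}_{k=1}^\infty$ has the same finite-dimensional distributions no matter which $\bm{S}$ is chosen, and no matter which mixture $\bm{\alpha}$ is used in the randomized allocation model. The two delay quantities in the lemma will then coincide term-by-term inside the $\esssup$, and the outer suprema over $\bm{S}$ and $\nu$ will become redundant.

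First I would expand the uniform-weight increment to observe that, in the homogeneous case, with $h(x) \triangleq f(x)/g(x)$,
\begin{align*}
\mathcal{L}_{\bm{\lambda}_U}(k,k-1) \;=\; \frac{1}{\binom{L}{m}} \sum_{\bm{E} \in \mathcal{E}} \prod_{\ell \in \bm{E}} h(X_\ell[k])
\end{align*}
is a symmetric function of the components $X_1[k],\ldots,X_L[k]$, since the sum ranges over \emph{all} size-$m$ subsets of $[L]$. Next, for any $\bm{E} \in \mathcal{E}$ and any permutation $\pi$ of $[L]$, if $\bm{X} \sim p_{\bm{E}}$ then the permuted vector $\bm{Y}$ with $Y_j = X_{\pi^{-1}(j)}$ has joint density $p_{\pi(\bm{E})}$, because marginals are either $f$ or $g$ and $\pi$ just relabels the anomalous coordinates. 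Combining this with the permutation invariance of $\mathcal{L}_{\bm{\lambda}_U}$ yields
\begin{align*}
\mathcal{L}_{\bm{\lambda}_U}(\bm{X}) \;\stackrel{d}{=}\; \mathcal{L}_{\bm{\lambda}_U}(\bm{X}') \quad \text{whenever } \bm{X} \sim p_{\bm{E}},\ \bm{X}' \sim p_{\bm{E}'},\ |\bm{E}|=|\bm{E}'|=m,
\end{align*}
and therefore the distribution of $\mathcal{L}_{\bm{\lambda}_U}(\bm{X})$ is the same under \emph{every} $p_{\bm{E}}$ with $\bm{E} \in \mathcal{E}$, and hence is also the same under any mixture $\overline{p}_{\bm{\alpha}}$.

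Given this distributional identity and the independence of $\{\bm{X}[k]\}$ across time, a short induction on $k$ using the recursion $W_{\bm{\lambda}_U}[k] = \max\{W_{\bm{\lambda}_U}[k-1],1\}\,\mathcal{L}_{\bm{\lambda}_U}(k,k-1)$ shows that for any $\nu \geq 0$ the joint laws of the post-change process $(W_{\bm{\lambda}_U}[\nu+1], W_{\bm{\lambda}_U}[\nu+2],\ldots)$ conditioned on $\mathscr{F}_\nu$ agree under $\mathbb{P}_\nu^{\bm{S}}$ for every trajectory $\bm{S}$ and under $\overline{\mathbb{P}}_\nu^{\bm{\alpha}}$ for every $\bm{\alpha}$; the pre-change part of $\mathscr{F}_\nu$ has the common law induced by $g(\bm{x})$ under both models. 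Since $\tau_W(\bm{\lambda}_U,b)-\nu$ on $\{\tau_W > \nu\}$ is a measurable functional of $W_{\bm{\lambda}_U}[\nu]$ and this post-change process, the conditional expectations $\mathbb{E}_\nu^{\bm{S}}[\tau_W - \nu \mid \tau_W > \nu, \mathscr{F}_\nu]$ and $\overline{\mathbb{E}}_\nu^{\bm{\alpha}}[\tau_W - \nu \mid \tau_W > \nu, \mathscr{F}_\nu]$ are equal $\mathbb{P}_\infty$-a.s.; taking $\esssup$ over $\mathscr{F}_\nu$ and $\sup$ over $\nu$ gives the coincidence of the two delays, and since this holds for every $\bm{S}$, the outer $\sup_{\bm{S}}$ in $\mathrm{WADD}$ is trivial.

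The only delicate step is the conditioning bookkeeping in the last paragraph: one must verify that the $\esssup$ in \eqref{eq:delay_metric} and \eqref{eq:random_delay} is taken with respect to equivalent (in fact identical) measures on $\mathscr{F}_\nu$, so that an a.s.\ identity of conditional expectations transfers to equality of essential suprema. Everything else is a clean consequence of the symmetric functional form of $\mathcal{L}_{\bm{\lambda}_U}$ and the exchangeability of the homogeneous model.
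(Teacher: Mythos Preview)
Your proposal is correct and takes a genuinely different route from the paper's proof. The paper argues via change of measure: it writes the conditional delay $V_\nu = \mathbb{E}_\nu^{\bm{S}}[\tau_W^{(N)} - \nu \mid \tau_W^{(N)} > \nu, \mathscr{F}_\nu]$ for a truncated stopping time as an expectation under $\mathbb{P}_\infty$ (picking up the path-specific Radon--Nikodym factor $\Gamma_{\bm{S}}$), derives a backward recursion for $V_\nu$, and then uses exchangeability of the observations under $\mathbb{P}_\infty$ to show that the recursion does not depend on $\bm{S}[\nu+1]$; averaging over $\bm{\alpha}$ converts $\Gamma_{\bm S}$ into the mixture ratio $\mathcal{L}_{\bm{\alpha}}$, and monotone convergence removes the truncation.

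You bypass the change of measure, the truncation, and the backward induction entirely. Your key observation is that the increment $\mathcal{L}_{\bm{\lambda}_U}(k,k-1)$ is a \emph{symmetric} function of $\bm{X}[k]$ and that the homogeneous post-change laws $\{p_{\bm{E}} : \bm{E} \in \mathcal{E}\}$ lie in a single permutation orbit, so the law of the increment is the same under every $p_{\bm{E}}$ and hence under every mixture $\overline{p}_{\bm{\alpha}}$. This directly yields that the conditional law of the Markov process $\{W_{\bm{\lambda}_U}[k]\}_{k>\nu}$ given $\mathscr{F}_\nu$ is identical across all models, from which equality of the conditional delays and their essential suprema follows. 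This is cleaner and more elementary; the paper's route reuses machinery (change of measure, Moustakides-style recursions) already in place for Theorem~\ref{ineqs_lemma}, which is an understandable economy in context but heavier than the lemma itself requires. Your flagged ``delicate step'' about the $\esssup$ is handled exactly as you say: both models restrict to $\mathbb{P}_\infty$ on $\mathscr{F}_\nu$, so the null sets agree.
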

\begin{proof}
See Appendix.
\end{proof}
By using Theorem \ref{ineqs_lemma} and Lemma \ref{symmetry_lemma} we can establish the exact optimality of the M-CUSUM test with uniform weights for the case of a homogeneous sensor network.
\begin{theorem}
\label{optimality}
Consider a homogeneous sensor network where $g_\ell(x) \triangleq g(x)$ and $f_\ell(x) \triangleq f(x)$ for all $\ell \in [L]$, $x \in \mathbb{R}$. Let $\gamma > 1$. The M-CUSUM test with uniform weights $\bm{\lambda} = \bm{\lambda}_U\triangleq \left[\binom Lm, \ldots, \binom Lm \right]^\top$ and threshold $b$ chosen such that $\mathbb{E}_\infty[\tau_W(\bm{\lambda}_U, b)] = \gamma$ is exactly optimal with respect to \eqref{eq:optimization}, i.e.,
\begin{align}
\mathrm{WADD}(\tau_W(\bm{\lambda}_U, b)) = \inf\limits_{\tau \in C_\gamma}\mathrm{WADD}(\tau).
\end{align}
\end{theorem}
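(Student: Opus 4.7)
The plan is to prove Theorem \ref{optimality} by a short sandwich argument that combines Theorem \ref{ineqs_lemma} with Lemma \ref{symmetry_lemma}. The key observation is that Theorem \ref{ineqs_lemma} places the infimum $\inf_{\tau \in C_\gamma}\mathrm{WADD}(\tau)$ between two quantities associated with the M-CUSUM test tuned to any mixing pmf $\bm{\alpha} \in \mathcal{A}$, while Lemma \ref{symmetry_lemma} asserts that in the homogeneous setting these two quantities collapse to the same number whenever the M-CUSUM weights are uniform. Therefore the sandwich is tight precisely for the test described in the theorem statement.

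First, I would specialize Theorem \ref{ineqs_lemma} to $\bm{\alpha} = \bm{\lambda}_U$. With the threshold $b$ chosen so that $\mathbb{E}_\infty[\tau_W(\bm{\lambda}_U,b)] = \gamma$ (so that $\tau_W(\bm{\lambda}_U,b) \in \mathcal{C}_\gamma$), the theorem yields
\begin{align*}
\mathrm{WADD}(\tau_W(\bm{\lambda}_U, b)) \;\geq\; \inf_{\tau \in \mathcal{C}_\gamma}\mathrm{WADD}(\tau) \;\geq\; \overline{\mathrm{WADD}}_{\bm{\lambda}_U}(\tau_W(\bm{\lambda}_U, b)).
\end{align*}
Next, I would invoke Lemma \ref{symmetry_lemma} with the same uniform choice $\bm{\lambda}_U$ and with $\bm{\alpha} = \bm{\lambda}_U$ to conclude that the outermost terms coincide, i.e.,
\begin{align*}
\mathrm{WADD}(\tau_W(\bm{\lambda}_U, b)) \;=\; \overline{\mathrm{WADD}}_{\bm{\lambda}_U}(\tau_W(\bm{\lambda}_U, b)).
\end{align*}
Chaining the two displays forces equality throughout, which gives $\mathrm{WADD}(\tau_W(\bm{\lambda}_U, b)) = \inf_{\tau \in \mathcal{C}_\gamma}\mathrm{WADD}(\tau)$, as required.

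There is essentially no obstacle in this proof itself, since all the analytic content has been pushed into Theorem \ref{ineqs_lemma} and Lemma \ref{symmetry_lemma}. The only small thing to verify is that $\tau_W(\bm{\lambda}_U,b)$ indeed lies in $\mathcal{C}_\gamma$, which is automatic once $b$ is chosen to attain $\mathbb{E}_\infty[\tau_W(\bm{\lambda}_U,b)] = \gamma$; existence of such a $b$ follows from the standard continuity and monotonicity of $\mathbb{E}_\infty[\tau_W(\bm{\lambda}_U,b)]$ in $b$ for the CUSUM recursion in \eqref{eq:test_rec}. Where the real work hides is in the two supporting results: Theorem \ref{ineqs_lemma} requires the Moustakides-type worst-case argument relating a candidate stopping time's performance under the worst-trajectory metric $\mathrm{WADD}$ to its performance under the randomized-allocation metric $\overline{\mathrm{WADD}}_{\bm{\alpha}}$, and Lemma \ref{symmetry_lemma} requires exploiting the permutation invariance of $g_\ell = g$, $f_\ell = f$ together with the symmetry of the uniform weights to show that the worst-case trajectory delay agrees with the i.i.d.\ mixture delay. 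Assuming those are in hand, the proof of Theorem \ref{optimality} is the three-line sandwich above.
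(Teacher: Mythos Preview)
Your proposal is correct and matches the paper's own proof, which simply states that the result follows directly by combining Theorem~\ref{ineqs_lemma} and Lemma~\ref{symmetry_lemma}. Your sandwich argument is precisely the intended combination, just spelled out more explicitly than in the paper.
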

\begin{proof}
The result follows directly by combining Theorem \ref{ineqs_lemma} and Lemma \ref{symmetry_lemma}.
\end{proof}
Theorem \ref{optimality} implies that, for the case of homogeneous sensors, the M-CUSUM test that solves the QCD problem of eqs. \eqref{eq:eq:random_stat_model} - \eqref{eq:random_delay} for a uniform pmf $\bm{\alpha}=\bm{\lambda}_U$ is also the exact solution to \eqref{eq:distribution} - \eqref{eq:optimization}. Next, we investigate whether a similar result holds for the general case of heterogeneous networks.
\section{Heterogeneous Sensor Network Case}
\label{sec:heterogeneous_network}
In Sec. \ref{sec:homogeneous}, we saw how the symmetry of a homogeneous sensor network can facilitate the construction of an exactly optimal test with respect to \eqref{eq:optimization}. However, in the case of a heterogeneous sensor network, such a symmetry is no longer valid, and a similar lemma to Lemma \ref{symmetry_lemma} can not be established in general. As a result, symmetry cannot be employed to deive an upper bound on the detection delay of our proposed M-CUSUM test. In this section, we show that by choosing the weight of the M-CUSUM test accordingly, a first-order asymptotically optimal test can be derived by exploiting an asymptotic type of symmetry that is related to the expected drift of our test statistic. 
\subsection{Universal Asymptotic Lower Bound on the WADD}
We begin our analysis by presenting an asymptotic lower bound on $\mathrm{WADD}$ for stopping times satisfying the false alarm constraint $\mathbb{E}_\infty[\tau] \geq \gamma$. Our lower bound is derived by using Theorem \ref{ineqs_lemma} together with the asymptotic lower bound on $\overline{\mathrm{WADD}}$ \cite{lorden:1971,lai-ieeetit-1998}. In particular, note that the inequalities in Theorem \ref{ineqs_lemma} hold for any arbitrary $\bm{\alpha} \in \mathcal{A}$. Hence, to get the tightest asymptotic lower bound we need to consider the $\bm{\alpha}$ that maximizes the coefficient of the asymptotic rate of $\overline{\mathrm{WADD}}$. To this end, define the minimizer of the effective KL number $I_{\bm{\alpha}}$ by 
\begin{align}
\label{eq:KL_MINI}
\bm{\alpha}^* \triangleq \argmin\limits_{\bm{\alpha}\,\in\,\mathcal{A}} I_{\bm{\alpha}}.
\end{align}
It can be shown that $I_{\bm{\alpha}}$ is strictly convex with respect to $\bm{\alpha}$, hence, such a minimizer is uniquely defined. We then have the following theorem:
\begin{theorem}
\label{theorem:lower_bound}
Let $\bm{\alpha}^*$ defined as in \eqref{eq:KL_MINI}. We then have that
\begin{align}
\inf\limits_{\tau \,\in \,\mathcal{C}_\gamma}\mathrm{WADD}(\tau) \geq \frac{\log \gamma}{I_{\bm{\alpha}^*}}(1+o(1))
\end{align}
  as $\gamma \rightarrow \infty$.
\end{theorem}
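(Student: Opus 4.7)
The plan is to combine Theorem \ref{ineqs_lemma}, which already lower bounds $\inf_{\tau \in \mathcal{C}_\gamma}\mathrm{WADD}(\tau)$ by the ordinary detection delay $\overline{\mathrm{WADD}}_{\bm{\alpha}}$ evaluated at the associated M-CUSUM rule, with the classical asymptotic lower bound of Lai \cite{lai-ieeetit-1998} on detection delays in the fully-specified i.i.d.\ minimax QCD problem, and then to optimize over the free mixture parameter $\bm{\alpha}\in\mathcal{A}$.

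Concretely, I would fix an arbitrary $\bm{\alpha}\in\mathcal{A}$ and pick $b=b(\bm{\alpha},\gamma)$ so that $\mathbb{E}_\infty[\tau_W(\bm{\alpha},b)]=\gamma$, so that $\tau_W(\bm{\alpha},b)\in\mathcal{C}_\gamma$. By Theorem \ref{ineqs_lemma},
\begin{equation*}
\inf_{\tau\in\mathcal{C}_\gamma}\mathrm{WADD}(\tau)\;\geq\;\overline{\mathrm{WADD}}_{\bm{\alpha}}\bigl(\tau_W(\bm{\alpha},b)\bigr).
\end{equation*}
Since the densities $\overline{p}_{\bm{\alpha}}$ and $g$ are fully specified, the randomized-allocation QCD problem \eqref{eq:eq:random_stat_model} falls within the classical i.i.d.\ minimax setting of \cite{lorden:1971,lai-ieeetit-1998}, where Lai's universal lower bound states that every $\tau$ with $\mathbb{E}_\infty[\tau]\geq\gamma$ satisfies
\begin{equation*}
\overline{\mathrm{WADD}}_{\bm{\alpha}}(\tau)\;\geq\;\frac{\log\gamma}{I_{\bm{\alpha}}}\bigl(1+o(1)\bigr),\qquad \gamma\to\infty.
\end{equation*}
Applying this to $\tau=\tau_W(\bm{\alpha},b)$ gives $\inf_{\tau\in\mathcal{C}_\gamma}\mathrm{WADD}(\tau)\geq(\log\gamma)/I_{\bm{\alpha}}\cdot(1+o(1))$ for every $\bm{\alpha}\in\mathcal{A}$.

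The final step is to optimize the bound over $\bm{\alpha}\in\mathcal{A}$. Since the inequality holds for every admissible mixture, the tightest bound is obtained by \emph{maximizing} the right-hand side, i.e., by minimizing $I_{\bm{\alpha}}$; the minimizer is precisely $\bm{\alpha}^*$ from \eqref{eq:KL_MINI}, and its uniqueness follows from the strict convexity of $\bm{\alpha}\mapsto I_{\bm{\alpha}}$ on the simplex (the map $\bm{\alpha}\mapsto\overline{p}_{\bm{\alpha}}$ is linear, so $I_{\bm{\alpha}}$ is a KL divergence from $g$ to a linearly parameterized family and is strictly convex). Substituting $\bm{\alpha}=\bm{\alpha}^*$ yields the asserted bound.

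The main obstacle is conceptual rather than technical: one must recognize that, to make the lower bound as large as possible, the randomization $\overline{p}_{\bm{\alpha}}$ should be chosen to be the \emph{hardest} to tell apart from $g$, i.e., the one minimizing $I_{\bm{\alpha}}$. This runs counter to the intuition one might have about M-CUSUM as a detection procedure, where a more informative mixture would be preferable. Given Theorem \ref{ineqs_lemma}, the remaining pieces---Lai's lower bound and the strict convexity of $I_{\bm{\alpha}}$---are standard, so no further technical hurdles are expected.
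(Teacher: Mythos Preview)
Your proposal is correct and follows essentially the same route as the paper: invoke Theorem~\ref{ineqs_lemma} to pass from $\mathrm{WADD}$ to $\overline{\mathrm{WADD}}_{\bm{\alpha}}$, apply the classical Lorden/Lai asymptotic lower bound for the fully specified i.i.d.\ problem, and then specialize to $\bm{\alpha}=\bm{\alpha}^*$. The paper writes the intermediate step as $\inf_{\tau\in\mathcal{C}_\gamma}\mathrm{WADD}(\tau)\geq\inf_{\tau\in\mathcal{C}_\gamma}\overline{\mathrm{WADD}}_{\bm{\alpha}}(\tau)$ (using that $\tau_W(\bm{\alpha},b)$ is exactly optimal for the randomized-allocation problem), whereas you bound $\overline{\mathrm{WADD}}_{\bm{\alpha}}(\tau_W(\bm{\alpha},b))$ directly via Lai's universal lower bound; these are equivalent and lead to the same conclusion.
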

\begin{proof}
By Theorem \ref{ineqs_lemma} we have that for any $\bm{\alpha} \in \mathcal{A}$ and any $\gamma>1$
\begin{align}
\label{eq:a_simple_eq}
\inf\limits_{\tau \,\in \,\mathcal{C}_\gamma} \mathrm{WADD}(\tau) \geq \inf\limits_{\tau \,\in \,\mathcal{C}_\gamma} \overline{\mathrm{WADD}}_{\bm{\alpha}}(\tau).
\end{align}
which implies that the inequality also holds for $\bm{\alpha} = \bm{\alpha}^*$ , i.e., 
\begin{align}
\inf\limits_{\tau \,\in\, \mathcal{C}_\gamma} \mathrm{WADD}(\tau) \geq \inf\limits_{\tau \,\in \,\mathcal{C}_\gamma} \overline{\mathrm{WADD}}_{\bm{\alpha}^*}(\tau) \sim \frac{\log \gamma}{I_{\bm{\alpha}^*}},
\end{align}
where the asymptotic delay approximation follows from the asymptotic analysis of the CUSUM test \cite{lorden:1971,lai-ieeetit-1998}.
\end{proof}
\subsection{Asymptotic Upper Bound on the WADD of M-CUSUM Test}
\label{sec:upper_bound}
Although deriving a lower bound on $\mathrm{WADD}$ is similar for both homogeneous and heterogeneous sensor networks (Theorem \ref{ineqs_lemma}), upper bounding $\mathrm{WADD}$ in the latter case for any $\bm \lambda$ is nontrivial. To find the weights of the M-CUSUM test that result in an asymptotically optimal test, it is important to further investigate the minimization of $I_{\bm{\alpha}}$. To this end, we present the following lemma:
\begin{lemma}\label{equal_drifts_lemma}
Let $\bm{\alpha}^*$ defined as in \eqref{eq:KL_MINI}. We then have that:

i) Case $m \geq 2$: $\bm{\alpha}^*$ cannot be a corner point of $\mathcal{A}$, i.e., $2 \leq \|\bm{\alpha}^*\|_0 \leq |\mathcal{E}|$. 

If $\|\bm{\alpha}^*\|_0 = |\mathcal{E}|$ (interior-point minimum), we have that
\begin{align}
\label{eq:equal_drifts}
\mathbb{E}_{p_{\bm E}}\left[\log\left(\frac{\overline{p}_{\bm{\alpha}^*}(\bm{X})}{g(\bm{X})} \right)\right] = \mathbb{E}_{p_{\bm{E}'}}\left[\log\left(\frac{\overline{p}_{\bm{\alpha}^*}(\bm{X})}{g(\bm{X})} \right)\right]
\end{align}
for all $\bm{E}$, $\bm{E}' \in \mathcal{E}$, where $\mathbb{E}_{p_{\bm E}}[\cdot]$ denotes the expected value when anomalous nodes are given in $\bm{E} \in \mathcal{E}$. 

If $2 \leq \|\bm{\alpha}^*\|_0 < |\mathcal{E}|$ (boundary-point minimum), let $\mathcal{E}' \triangleq \{\bm{E} \in \mathcal{E} : \alpha^*_{\bm{E}} > 0 \}$ the subset of vectors in $\mathcal{E}$ for which non-zero weights are assigned in $\bm{\alpha}^*$. We then have that for all $\bm{E}$, $\bm{E}' \in \mathcal{E}'$ eq. \eqref{eq:equal_drifts} holds. Furthermore, we have that for all $\bm{B} \in \mathcal{E}'$, $\bm{B}' \in \mathcal{E} \,\char`\\ \, \mathcal{E}'$
\begin{align}
\label{eq:unequal_drifts}
\mathbb{E}_{p_{\bm B'}}\left[\log\left(\frac{\overline{p}_{\bm{\alpha}^*}(\bm{X})}{g(\bm{X})} \right)\right] > \mathbb{E}_{p_{\bm{B}}}\left[\log\left(\frac{\overline{p}_{\bm{\alpha}^*}(\bm{X})}{g(\bm{X})} \right)\right].
\end{align}
ii) Case $m = 1$ (single anomalous node): $\bm{\alpha}^*$ is an interior point of $\mathcal{A}$, i.e., $\|\bm{\alpha}^*\|_0 = |\mathcal{E}| = L$.
\end{lemma}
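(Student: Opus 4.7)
My plan is to treat the minimization of $I_{\bm{\alpha}}=D(\overline{p}_{\bm{\alpha}}\|g)$ over the simplex $\mathcal{A}$ as a strictly convex program whose unique minimizer $\bm{\alpha}^*$ is characterized by the KKT conditions. Using $\int p_{\bm{E}}(\bm{x})\,d\bm{x}=1$, a direct differentiation yields
\begin{equation*}
\frac{\partial I_{\bm{\alpha}}}{\partial \alpha_{\bm{E}}}=\mathbb{E}_{p_{\bm{E}}}\!\left[\log\frac{\overline{p}_{\bm{\alpha}}(\bm{X})}{g(\bm{X})}\right]+1.
\end{equation*}
With a multiplier $\mu$ for $\sum_{\bm{E}}\alpha_{\bm{E}}=1$ and $\eta_{\bm{E}}\ge 0$ for $\alpha_{\bm{E}}\ge 0$, stationarity plus complementary slackness gives a common value of $\mathbb{E}_{p_{\bm{E}}}[\log(\overline{p}_{\bm{\alpha}^*}/g)]$ on $\mathcal{E}'$; multiplying by $\alpha^*_{\bm{E}}$ and summing identifies this value as $I_{\bm{\alpha}^*}$. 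This already yields \eqref{eq:equal_drifts} in the interior-point subcase (when $\mathcal{E}'=\mathcal{E}$) and in the boundary-point subcase (restricted to $\mathcal{E}'$), together with the weak version $\mathbb{E}_{p_{\bm{B}'}}[\log(\overline{p}_{\bm{\alpha}^*}/g)]\ge I_{\bm{\alpha}^*}$ for $\bm{B}'\in\mathcal{E}\setminus\mathcal{E}'$.

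To rule out corner minimizers in part (i) with $m\ge 2$, I would assume $\bm{\alpha}^*=e_{\bm{E}_0}$ and, for any $\bm{E}_1\ne\bm{E}_0$, compute the directional derivative of $I$ at $e_{\bm{E}_0}$ along $e_{\bm{E}_1}-e_{\bm{E}_0}$. Using the product form $\log(p_{\bm{E}_0}/g)=\sum_{\ell\in\bm{E}_0}\log(f_\ell/g_\ell)$ together with independence across coordinates, this derivative collapses to
\begin{equation*}
-\sum_{\ell\in\bm{E}_0\setminus\bm{E}_1}\bigl[D(f_\ell\|g_\ell)+D(g_\ell\|f_\ell)\bigr]<0,
\end{equation*}
strict negativity following from $|\bm{E}_0|=|\bm{E}_1|=m$ together with $\bm{E}_0\ne\bm{E}_1$ forcing $\bm{E}_0\setminus\bm{E}_1$ to be nonempty. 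This descent direction contradicts the optimality of $e_{\bm{E}_0}$, so $\|\bm{\alpha}^*\|_0\ge 2$.

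For part (ii) with $m=1$ I would strengthen the above to full support by exploiting a feature specific to singleton anomalies: whenever $\alpha^*_{\ell_0}=0$, the ratio $\overline{p}_{\bm{\alpha}^*}(\bm{x})/g(\bm{x})=\sum_{\ell\ne\ell_0}\alpha^*_\ell\,f_\ell(x_\ell)/g_\ell(x_\ell)$ does not depend on $x_{\ell_0}$. Swapping the marginal of $x_{\ell_0}$ from $f_{\ell_0}$ to $g_{\ell_0}$ therefore leaves the expectation unchanged, and Jensen's inequality gives
\begin{equation*}
\mathbb{E}_{p_{e_{\ell_0}}}\!\left[\log\frac{\overline{p}_{\bm{\alpha}^*}(\bm{X})}{g(\bm{X})}\right]=\mathbb{E}_g\!\left[\log\frac{\overline{p}_{\bm{\alpha}^*}(\bm{X})}{g(\bm{X})}\right]\le 0<I_{\bm{\alpha}^*},
\end{equation*}
with the last strict inequality using $\overline{p}_{\bm{\alpha}^*}\ne g$. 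This contradicts the weak KKT inequality at the inactive coordinate $e_{\ell_0}$, forcing $\alpha^*_{\ell_0}>0$ for every $\ell_0\in[L]$.

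The principal obstacle is the strict inequality \eqref{eq:unequal_drifts} in the boundary-point subcase of part (i): the KKT analysis only delivers the weak form, and strict complementarity does not come for free from strict convexity. My plan is to proceed by contradiction---if equality held for some $\bm{B}'\notin\mathcal{E}'$, then $\bm{\alpha}^*$ would simultaneously satisfy the interior first-order conditions of the reduced program on the face supported by $\mathcal{E}'\cup\{\bm{B}'\}$---and to use the uniqueness of the global minimizer together with the explicit product structure of $p_{\bm{B}'}$ to construct a feasible perturbation along which $I$ strictly decreases. Keeping the perturbation inside the simplex while rigorously demonstrating the strict decrease is where the bulk of the technical effort will lie.
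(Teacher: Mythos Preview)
Your approach is essentially the same as the paper's: both reduce everything to first-order optimality conditions for the convex program $\min_{\bm{\alpha}\in\mathcal{A}} I_{\bm{\alpha}}$, rule out corners via the same directional-derivative computation (your formula $-\sum_{\ell\in\bm{E}_0\setminus\bm{E}_1}[D(f_\ell\|g_\ell)+D(g_\ell\|f_\ell)]$ is exactly what the paper's expression simplifies to), obtain equal drifts on the support from vanishing gradients, and handle $m=1$ by the identical observation that $\overline{p}_{\bm{\alpha}^*}/g$ does not depend on $x_{\ell_0}$ when $\alpha^*_{\ell_0}=0$, so the expectation under $p_{e_{\ell_0}}$ equals that under $g$. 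The only cosmetic difference is packaging: the paper eliminates one coordinate and works with the reduced variable $\bm{\beta}$, so its derivative formula is a \emph{difference} of two expected drifts rather than your $\mathbb{E}_{p_{\bm{E}}}[\log(\overline{p}_{\bm{\alpha}}/g)]+1$ with a separate multiplier $\mu$; these are equivalent parametrizations.

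On the strict inequality \eqref{eq:unequal_drifts}: your caution is well-placed, since KKT and strict convexity alone only deliver the weak form $\ge$. The paper's proof, however, does not close this gap either---it simply asserts $\frac{\partial q}{\partial \beta_{\bm{E}_i}}\big|_{\bm{\beta}^*}>0$ for inactive $\bm{E}_i$ without justification. So your proposal already matches the paper's level of rigor on this point, and the perturbation argument you sketch would, if it works, go beyond what the paper supplies. (It is worth noting that the downstream application in Theorem~\ref{theorem:upper_bound} only uses the weak inequality $\mathbb{E}_{p_{\bm{B}'}}[\log(\overline{p}_{\bm{\alpha}^*}/g)]\ge I_{\bm{\alpha}^*}$, so the strict form is not load-bearing for the main results.)
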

\begin{proof}
See Appendix.
\end{proof}

By exploiting the properties presented in Lemma \ref{equal_drifts_lemma}, we derive an asymptotic upper bound on $\mathrm{WADD}(\tau_W(\bm{\alpha}^*, b))$. In particular, we have the following theorem:
\begin{theorem}
\label{theorem:upper_bound}
Let $\bm{\alpha}^*$ defined as in \eqref{eq:KL_MINI}. Assume that 
\begin{align}
\label{eq:assumptio_mom}
\max_{\bm{E} \,\in\, \mathcal{E}}\mathbb{E}_{p_E}\left[\left(\log\frac{\overline{p}_{\bm{\alpha}^*}(\bm{X})}{g(\bm{X})}  \right)^2   \right]<\infty
\end{align}
We then have that as $b \rightarrow \infty$
\begin{align}
\mathrm{WADD}(\tau_W(\bm{\alpha}^*, b)) \leq  \frac{b}{I_{\bm{\alpha}^*}}(1 + o(1)).
\end{align}
\end{theorem}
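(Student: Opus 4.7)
The plan is to reduce the trajectory-dependent WADD in \eqref{eq:delay_metric} to a random-walk hitting-time problem with a uniformly positive drift, and then to invoke a Wald-type identity together with the second-moment assumption \eqref{eq:assumptio_mom}.

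First I would exploit the monotonicity of the M-CUSUM recursion \eqref{eq:test_rec} in its starting value. Because $w \mapsto \max(w,1)\,\mathcal{L}_{\bm{\alpha}^*}(k,k-1)$ is nondecreasing in $w$, a pathwise coupling shows that $\tau_W(\bm{\alpha}^*,b)$ is a nonincreasing functional of the initial state. Since $W_{\bm{\alpha}^*}[0]=0$ and $W_{\bm{\alpha}^*}[\nu]\ge 0$ on $\{\tau_W>\nu\}$, the $\esssup$ over $\mathscr{F}_\nu$ and the $\sup$ over $\nu$ in \eqref{eq:delay_metric} are both attained at $\nu=0$, yielding
\[
\mathrm{WADD}(\tau_W(\bm{\alpha}^*,b))=\sup_{\bm{S}}\mathbb{E}_0^{\bm{S}}\!\left[\tau_W(\bm{\alpha}^*,b)\right].
\]

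Next I would dominate the stopping time by a simple first-crossing time. Retaining only the $i=1$ term in the max defining \eqref{eq:test_stat} gives $\log W_{\bm{\alpha}^*}[k]\ge S_k\triangleq\sum_{j=1}^{k}Z_j$ with $Z_j\triangleq\log\bigl(\overline{p}_{\bm{\alpha}^*}(\bm{X}[j])/g(\bm{X}[j])\bigr)$, so pathwise $\tau_W(\bm{\alpha}^*,b)\le T_b\triangleq\inf\{k\ge 1:S_k\ge b\}$. Under $\mathbb{P}_0^{\bm{S}}$ the $Z_j$'s are independent with $\mathbb{E}_0^{\bm{S}}[Z_j]=\mathbb{E}_{p_{\bm{S}[j]}}[\log(\overline{p}_{\bm{\alpha}^*}/g)]$, and Lemma \ref{equal_drifts_lemma} (combining \eqref{eq:equal_drifts} and \eqref{eq:unequal_drifts}) delivers the crucial uniform drift bound $\mathbb{E}_0^{\bm{S}}[Z_j]\ge I_{\bm{\alpha}^*}$ for every $j$ and every trajectory $\bm{S}$. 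Using that $\{T_b\ge j\}\in\sigma(Z_1,\dots,Z_{j-1})$ is independent of $Z_j$, a Wald-type identity gives
\[
b+\mathbb{E}_0^{\bm{S}}[S_{T_b}-b]=\sum_{j\ge 1}\mathbb{E}_0^{\bm{S}}[Z_j]\,\mathbb{P}_0^{\bm{S}}(T_b\ge j)\ge I_{\bm{\alpha}^*}\,\mathbb{E}_0^{\bm{S}}[T_b],
\]
so $\mathbb{E}_0^{\bm{S}}[T_b]\le (b+\mathbb{E}_0^{\bm{S}}[S_{T_b}-b])/I_{\bm{\alpha}^*}$.

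The main obstacle is to bound the overshoot $\mathbb{E}_0^{\bm{S}}[S_{T_b}-b]$ uniformly in $\bm{S}$. Using $0\le S_{T_b}-b\le Z_{T_b}^+$, Jensen, and $Z_{T_b}^2\le\sum_{j\ge 1}Z_j^2\,\mathbb{I}\{T_b\ge j\}$, the independence of $Z_j$ from $\{T_b\ge j\}$ together with the uniform second-moment constant $M\triangleq\max_{\bm{E}\in\mathcal{E}}\mathbb{E}_{p_{\bm{E}}}[Z^2]<\infty$ from \eqref{eq:assumptio_mom} yield
\[
\mathbb{E}_0^{\bm{S}}[S_{T_b}-b]\le \sqrt{\mathbb{E}_0^{\bm{S}}[Z_{T_b}^2]}\le \sqrt{M\,\mathbb{E}_0^{\bm{S}}[T_b]}.
\]
Substituting back produces a self-bounding inequality $y\le (b+\sqrt{My})/I_{\bm{\alpha}^*}$ for $y=\mathbb{E}_0^{\bm{S}}[T_b]$, whose solution gives $y\le b/I_{\bm{\alpha}^*}+O(\sqrt{b})=(b/I_{\bm{\alpha}^*})(1+o(1))$ uniformly in $\bm{S}$. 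The stated bound follows after taking the supremum over $\bm{S}$.
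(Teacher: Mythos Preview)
Your proposal is correct and takes a genuinely different route from the paper. Both arguments first reduce to $\mathrm{WADD}(\tau_W(\bm{\alpha}^*,b))=\sup_{\bm S}\mathbb{E}_0^{\bm S}[\tau_W(\bm{\alpha}^*,b)]$ and both rely on Lemma~\ref{equal_drifts_lemma} to obtain the uniform drift lower bound $\mathbb{E}_{p_{\bm S[j]}}[Z_j]\ge I_{\bm{\alpha}^*}$. From there, however, the paper follows Lai's blocking technique: it writes $\mathbb{E}_0^{\bm S}[\tau_W/n_b]$ as a sum of tail probabilities with $n_b=b/(I_{\bm{\alpha}^*}-\epsilon)$, bounds $\mathbb{P}_0^{\bm S}(\tau_W>\zeta n_b)$ by a product over $\zeta$ disjoint blocks, and uses Chebyshev (invoking \eqref{eq:assumptio_mom}) to show each factor is at most $\bar\sigma^2/(n_b\epsilon^2)\to 0$, so the geometric series sums to $O(1)$. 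Your approach instead dominates $\tau_W$ by the one-sided first-passage time $T_b$ of the raw random walk $S_k$, applies a Wald-type identity with the uniform drift bound, and controls the overshoot through a second-moment Wald bound, closing via a self-bounding quadratic. Your route is more direct and even yields the sharper remainder $O(\sqrt b)$ rather than the $\epsilon$-slack of the blocking argument; the paper's route, on the other hand, never invokes Wald's identity and therefore sidesteps the need to verify $\mathbb{E}_0^{\bm S}[T_b]<\infty$ beforehand. In your argument that finiteness is implicitly required to justify interchanging sum and expectation in the Wald step; the cleanest fix is to run the identity with the truncated time $T_b\wedge N$ (for which all sums are finite), obtain the same self-bounding inequality for $\mathbb{E}_0^{\bm S}[T_b\wedge N]$ uniformly in $N$, and let $N\to\infty$ by monotone convergence.
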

\begin{proof}
Proof is based on Lemma \ref{equal_drifts_lemma} and the analysis in \cite{lai-ieeetit-1998} and is provided in the Appendix.
\end{proof}
\subsection{Asymptotic Optimality of M-CUSUM Test}
By combining Theorems \ref{theorem:lower_bound} with \ref{theorem:upper_bound} we can establish the asymptotic optimality of the M-CUSUM test for weight choice $\bm{\lambda} = \bm{\alpha}^*$.
\begin{theorem}
\label{theorem:as_opt}
Let $\bm{\alpha}^*$ defined as in \eqref{eq:KL_MINI}. Assume that 
\begin{align}
\max_{\bm{E} \,\in\, \mathcal{E}}\mathbb{E}_{p_{\bm E}}\left[\left(\log\frac{\overline{p}_{\bm{\alpha}^*}(\bm{X})}{g(\bm{X})}  \right)^2   \right]<\infty.
\end{align}
We then have that:

i) For any $\gamma > 1$, $\mathbb{E}_\infty[\tau_W(\bm{\alpha}^*, \log \gamma)]  \geq \gamma$.

ii)
\begin{align}
\inf\limits_{\tau \in \mathcal{C}_\gamma}\mathrm{WADD}(\tau) \sim \mathrm{WADD}(\tau_W(\bm{\alpha}^*, \log \gamma)) \sim  \frac{\log \gamma}{I_{\bm{\alpha}^*}}
\end{align}
as $\gamma \rightarrow \infty$.
\end{theorem}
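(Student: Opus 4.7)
The plan is to obtain the result by combining the lower bound of Theorem \ref{theorem:lower_bound} with the upper bound of Theorem \ref{theorem:upper_bound}, once part (i) is in place to certify that the proposed stopping rule lies in $\mathcal{C}_\gamma$ when $b = \log\gamma$. The two asymptotic bounds were the serious work; the present theorem is largely a packaging step.

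For part (i), I would invoke the standard mean-time-to-false-alarm bound for CUSUM-type procedures. The process $\mathcal{L}_{\bm{\alpha}^*}(k,0)$ is a nonnegative $(\mathscr{F}_k,\mathbb{P}_\infty)$-martingale with unit mean, since for every $\bm{\alpha} \in \mathcal{A}$ the density $\overline{p}_{\bm{\alpha}}(\bm{x})/g(\bm{x})$ integrates to one under $g$. A one-sided SPRT analysis (e.g., Wald's identity applied to the log-likelihood ratio stopped at threshold $b$) yields that the mean time to first up-crossing of $e^b$ by $\mathcal{L}_{\bm{\alpha}^*}$ is at least $e^b$ under $\mathbb{P}_\infty$. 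The M-CUSUM stopping time $\tau_W(\bm{\alpha}^*, b)$ is well known to inherit the same bound via the renewal representation of Page's CUSUM: the number of SPRT-like cycles required until threshold crossing is geometric, and the expected length of a single non-crossing cycle multiplied by the reciprocal of the crossing probability exceeds $e^b$. Setting $b = \log\gamma$ gives $\mathbb{E}_\infty[\tau_W(\bm{\alpha}^*, \log\gamma)] \geq \gamma$, which is exactly (i). This step is routine but is the only place where the false alarm constraint enters.

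For part (ii), (i) guarantees $\tau_W(\bm{\alpha}^*, \log\gamma) \in \mathcal{C}_\gamma$, so
\begin{align*}
\inf_{\tau \in \mathcal{C}_\gamma}\mathrm{WADD}(\tau) \;\leq\; \mathrm{WADD}(\tau_W(\bm{\alpha}^*, \log\gamma)).
\end{align*}
Applying Theorem \ref{theorem:upper_bound} with $b = \log\gamma$, which is legitimate because the second-moment assumption \eqref{eq:assumptio_mom} is assumed here verbatim, yields
\begin{align*}
\mathrm{WADD}(\tau_W(\bm{\alpha}^*, \log\gamma)) \;\leq\; \frac{\log\gamma}{I_{\bm{\alpha}^*}}\bigl(1 + o(1)\bigr) \quad \text{as } \gamma \to \infty.
\end{align*}
The matching asymptotic lower bound on $\inf_{\tau \in \mathcal{C}_\gamma}\mathrm{WADD}(\tau)$ is furnished directly by Theorem \ref{theorem:lower_bound}. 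Chaining the three inequalities sandwiches both $\inf_{\tau \in \mathcal{C}_\gamma}\mathrm{WADD}(\tau)$ and $\mathrm{WADD}(\tau_W(\bm{\alpha}^*, \log\gamma))$ between $\frac{\log\gamma}{I_{\bm{\alpha}^*}}(1+o(1))$ on both sides, yielding the double asymptotic equivalence in (ii).

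I do not anticipate a serious obstacle, since the heavy lifting has already been performed. The only non-trivial verification is the MTFA bound in part (i); if one wanted to avoid re-deriving it, the simplest route is to cite the classical martingale argument of Lorden \cite{lorden:1971} and Moustakides \cite{moustakides:1986}, which applies verbatim once one notices that the log-likelihood increments of the M-CUSUM are built from a genuine likelihood ratio between the pre-change joint density $g$ and the mixture $\overline{p}_{\bm{\alpha}^*}$, regardless of the structure of $\bm{\alpha}^*$ identified in Lemma \ref{equal_drifts_lemma}.
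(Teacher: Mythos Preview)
Your proposal is correct and follows essentially the same approach as the paper: part (i) is obtained by citing the classical MTFA analysis of the CUSUM test (the paper cites \cite{lorden:1971, lai-ieeetit-1998}), and part (ii) is obtained by combining (i) with Theorems \ref{theorem:lower_bound} and \ref{theorem:upper_bound}. Your write-up is in fact more detailed than the paper's own proof, which dispatches both parts in two lines.
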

\begin{proof}
i) Follows directly from the MTFA analysis of the CUSUM test \cite{lorden:1971, lai-ieeetit-1998}.

ii) Follows from i) and Theorems \ref{theorem:lower_bound} and \ref{theorem:upper_bound}.
\end{proof}
Essentially, Theorem \ref{theorem:as_opt} implies that, for the case of heterogeneous sensors, there exists a choice of $\bm{\alpha}$ such that the M-CUSUM test that solves the QCD problem of eqs. \eqref{eq:eq:random_stat_model} - \eqref{eq:random_delay} for said $\bm{\alpha}$ exactly is also asymptotically optimal with respect to \eqref{eq:distribution} - \eqref{eq:optimization}. This $\bm{\alpha}$ is the one that minimizes the KL-number in \eqref{eq:KL} and depends on the data-generating distributions of the sensors.

The asymptotic optimality of the M-CUSUM test with weights $\bm{\alpha}^*$ can be intuitively explained through Lemma \ref{equal_drifts_lemma}. In particular, since large $\gamma$ implies a large threshold, if we consider the logarithm of the M-CUSUM test statistic in \eqref{eq:test_stat}, the expected drift of the added log-likelihood ratio dominates the asymptotic performance of the M-CUSUM test. For a general choice of $\bm{\lambda}$ this expected drift is not generally equal throughout the different anomaly placements $\bm{E} \in \mathcal{E}$. In such a case, the worst-path delay will be dominated by the smallest resulting drift among anomaly placements. However, by Lemma \ref{equal_drifts_lemma} we know that choosing $\bm{\lambda} = \bm{\alpha}^*$ implies that the drift of the statistic is equal among a specific subset of anomaly placements, and also equals to the inverse of the best possible asymptotic rate coefficient. Furthermore, we saw in Lemma \ref{equal_drifts_lemma} that all other placements lead to a larger drift, hence, do not play a role asymptotically due to the worst-path aspect of the delay. As a result, we have that the delay rate of our proposed test will match the universally best rate.
\section{Numerical Results}
In this section, we conduct numerical simulations for the studied moving anomaly QCD problem for the case of a single anomalous node ($m = 1$) and different network sizes $L$. We present results for both homogeneous and heterogeneous sensor networks.

For the case of a homogeneous network, we assume that $g = \mathcal{N}(0,1)$ and $f = \mathcal{N}(1,1)$. For homogeneous networks, we can introduce two additional tests that can be used as a comparison: a heuristic test; and an oracle-type test.
In particular, note that for all $\bm{S}$ we have that
\begin{align*}
&\mathbb{E}_\infty\left[\sum\limits_{\ell=1}^L \log\frac{f(X_{\ell}[k])}{g(X_{\ell}[k])} +(L-m)D(f\|g)\right] = -mD(f\|g)<0
\\& \mathbb{E}_0^{\bm{S}}\left[\sum\limits_{\ell=1}^L \log\frac{f(X_{\ell}[k])}{g(X_{\ell}[k])} +(L-m)D(f\|g)\right] = m D(f\|g)>0.
\end{align*}
This suggests that the following Naive-CuSum (N-CuSum) test may be a candidate test for detecting the distribution change described in \eqref{eq:distribution}. In particular, consider the test described by the following recursion:

{{
\small
\begin{align}
W_N[k]  \triangleq \left( W_{N}[k-1] + \sum\limits_{\ell=1}^L \log\frac{f(X_{\ell}[k])}{g(X_{\ell}[k])} +(L-m)D(f\|g)\right)^+
\end{align}
}}
with $W_N[0]  \triangleq 0$ and corresponding stopping time 
\begin{flalign*}
\mathcal{\tau}_N =  \inf\left\{ k \geq 1 : W_N[k] \geq b \right\}.
\end{flalign*}
Although the N-CuSum test can be employed to detect the anomaly because of having a positive expected drift, it does not necessarily solve the QCD problem in \eqref{eq:optimization}. 

We also compare our proposed procedure to an Oracle-CUSUM (O-CUSUM) test, which is a CUSUM test that uses complete knowledge of $\bm{S}$. I.e, to define this test we assume that at time $k$ we do not know whether a change has occured, but we know which set of sensors would be affected if an anomaly had already emerged in the network. In particular, consider the statistic calculated by using the following recursion:

\begin{align}
W_O[k] =  \left( W_{O}[k-1] + \log \left(\prod_{\ell \, \in \,\bm{S}[k] }\frac{f(X_{\ell}[k])}{g(X_{\ell}[k])} \right)\right)^+  
\end{align}
with $W_O[0] \triangleq 0$ and with corresponding stopping time 
\begin{align}
\tau_O =  \inf\left\{ k \geq 1 : W_O[k] \geq b \right\}.
\end{align}
Since this O-CuSum test uses the knowledge of the location of the anomalous nodes, it is expected to perform better than our proposed test. However, such a test is not tractable since in practice such location information will not be available to the decision maker.

In Figs. \ref{fig:L-5}, \ref{fig:L-10} and \ref{fig:L-20} we compare the M-CUSUM test, with the N-CUSUM test and the O-CUSUM test for network sizes $L=5$, $L=10$ and $L=20$. Note that due to the symmetry of the M-CUSUM and the N-CUSUM test, $\mathrm{WADD}$ is equal to the delay for any arbitrary path of the anomaly. By inspecting Figs. \ref{fig:L-5}, \ref{fig:L-10} and \ref{fig:L-20} we note that the M-CUSUM test outperforms the heuristic N-CUSUM test, which is expected since the M-CUSUM test is optimal with respect to \eqref{eq:optimization}. In addition, we note that the O-CUSUM test performs better than the other detection schemes, which is to be expected since it exploits complete knowledge of $\bm{S}$. We also note that as $L$ increases the performance gap between the O-CUSUM test and the M-CUSUM test increases. This is because as the network size increases the noise that is introduced in the M-CUSUM test due to nodes that are not anomalous also increases. This is not the case for the O-CUSUM test, since this scheme inherently assumes complete knowledge of the anomalous nodes. In Fig. \ref{fig:different_L_comparison}, we evaluate the performance of our proposed M-CUSUM test for different values of $L$. We note that as $L$ increases our proposed test performs worse, which is expected since the algorithm is affected by more noise from non-anomalous nodes for larger network sizes.
\label{sec:nums}
\begin{figure*}[t]
\centering
\subfigure[$\mathrm{WADD}$ versus MTFA for $L=5$, $m=1$.]{
\label{fig:L-5}
\mbox{
\epsfig{file=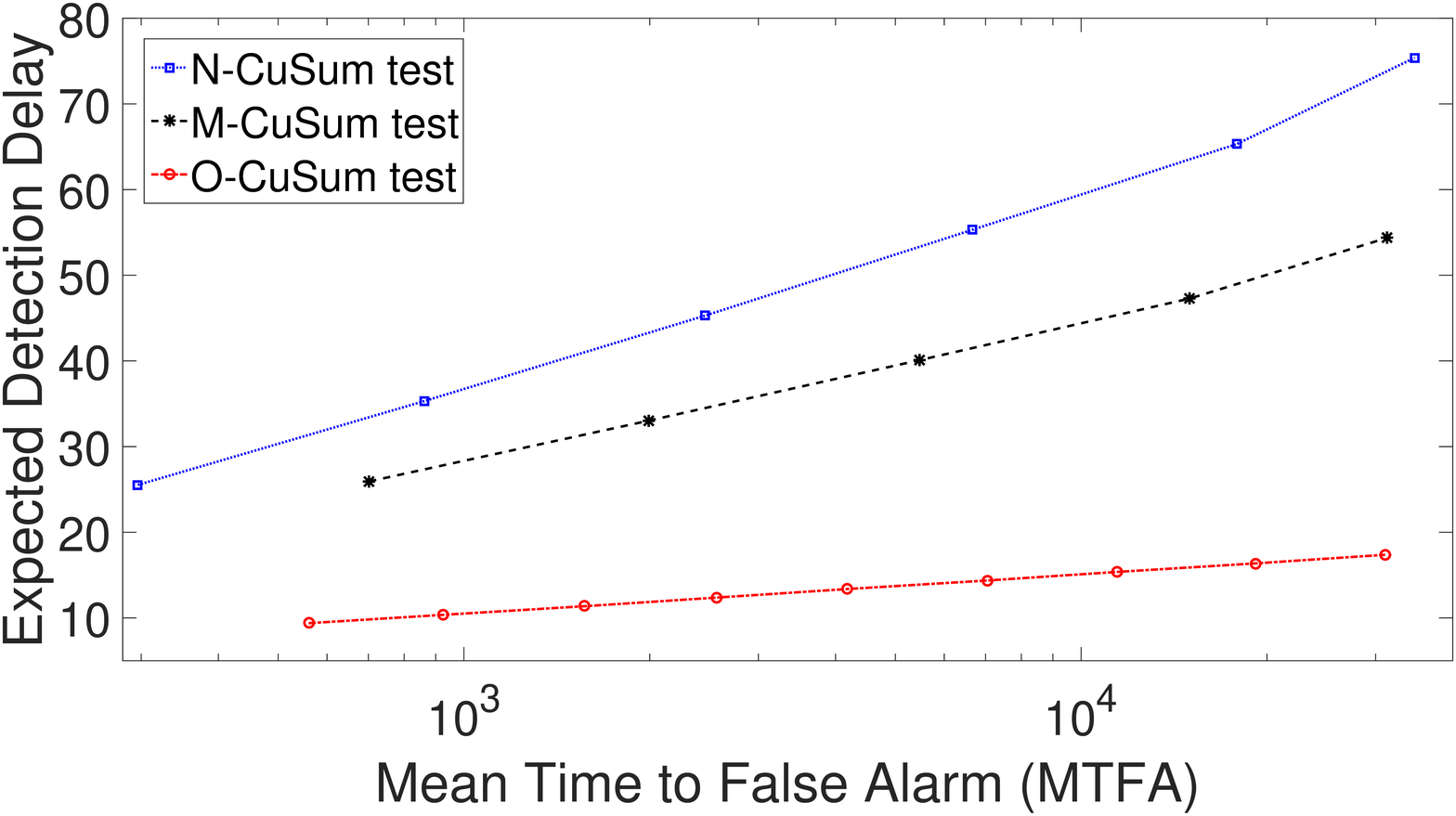,width=0.4\textwidth}}
}
\hspace{0.1in}
\subfigure[$\mathrm{WADD}$ versus MTFA for $L=10$, $m=1$.]{
\label{fig:L-10}
\mbox{
\epsfig{file=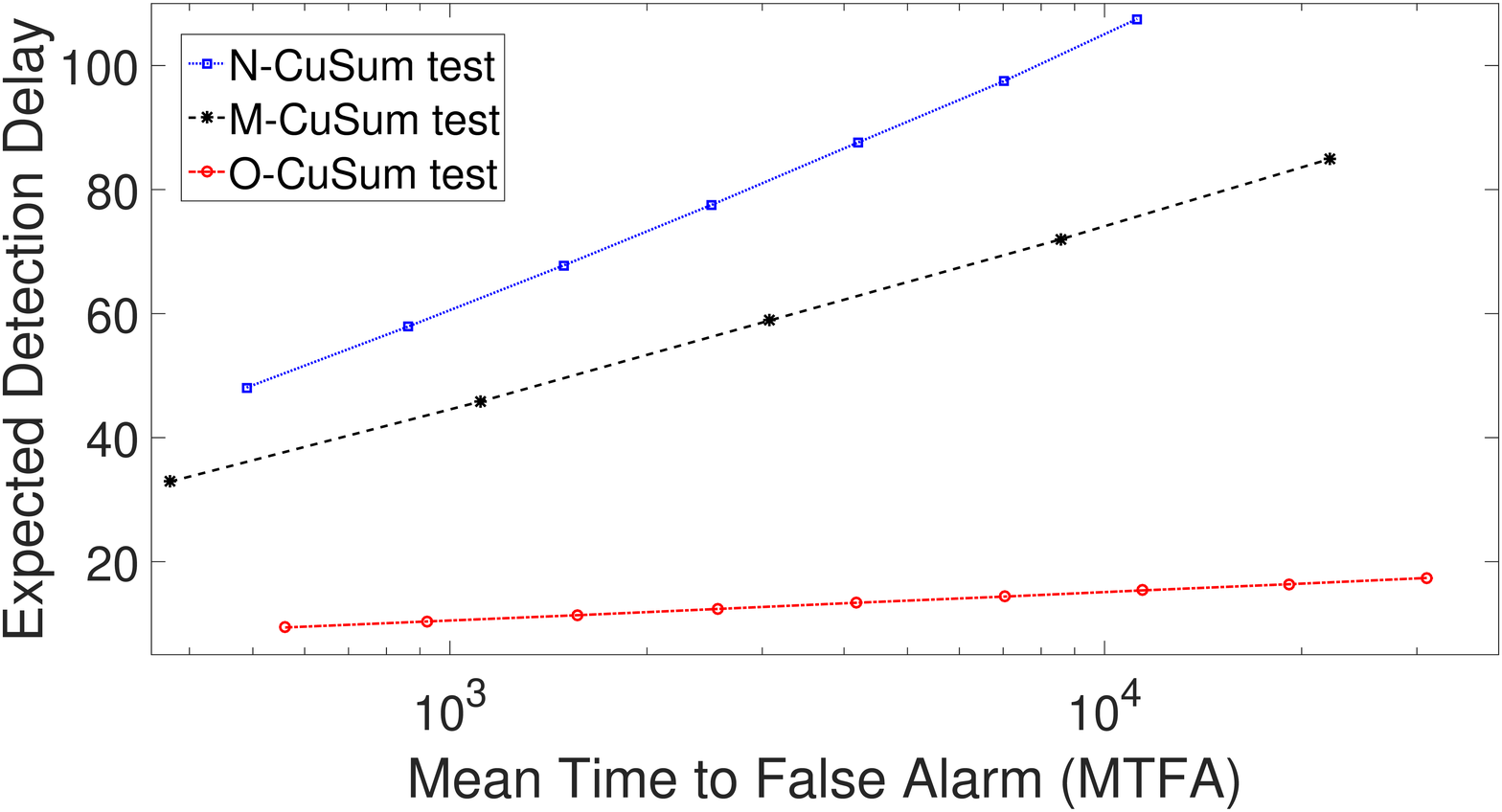,width=0.4\textwidth}}
}
\caption{$\mathrm{WADD}$ versus MTFA for homogeneous sensor network.}
\label{fig:FIGS1}
\end{figure*}

\begin{figure*}[t]
\centering
\subfigure[$\mathrm{WADD}$ versus MTFA for $L=20$, $m=1$.]{
\label{fig:L-20}
\mbox{
\epsfig{file=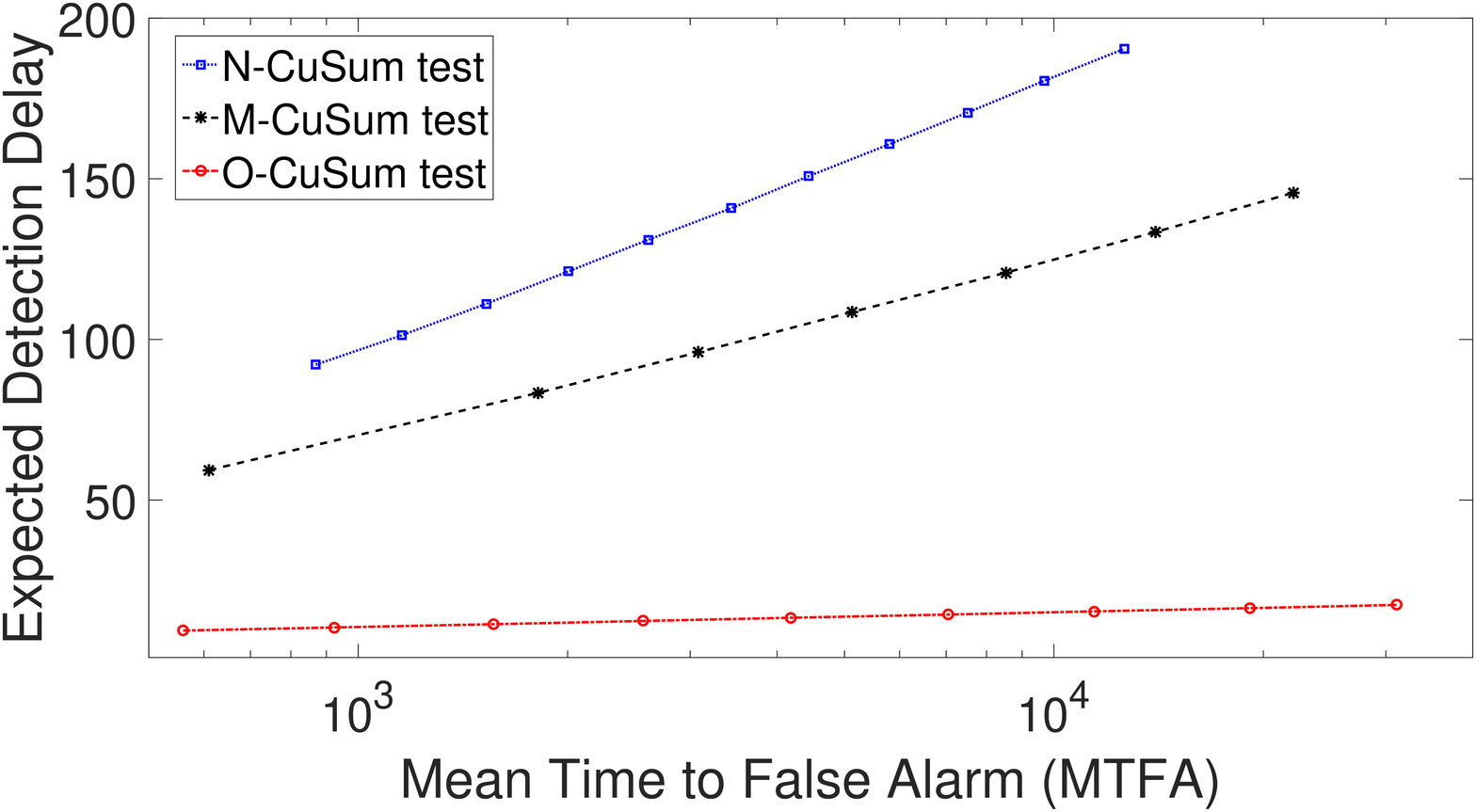,width=0.4\textwidth}}
}
\hspace{0.1in}
\subfigure[$\mathrm{WADD}$ versus MTFA for the M-CuSum when $m =1$ and for different $L$ values.]{
\label{fig:different_L_comparison}
\mbox{
\epsfig{file=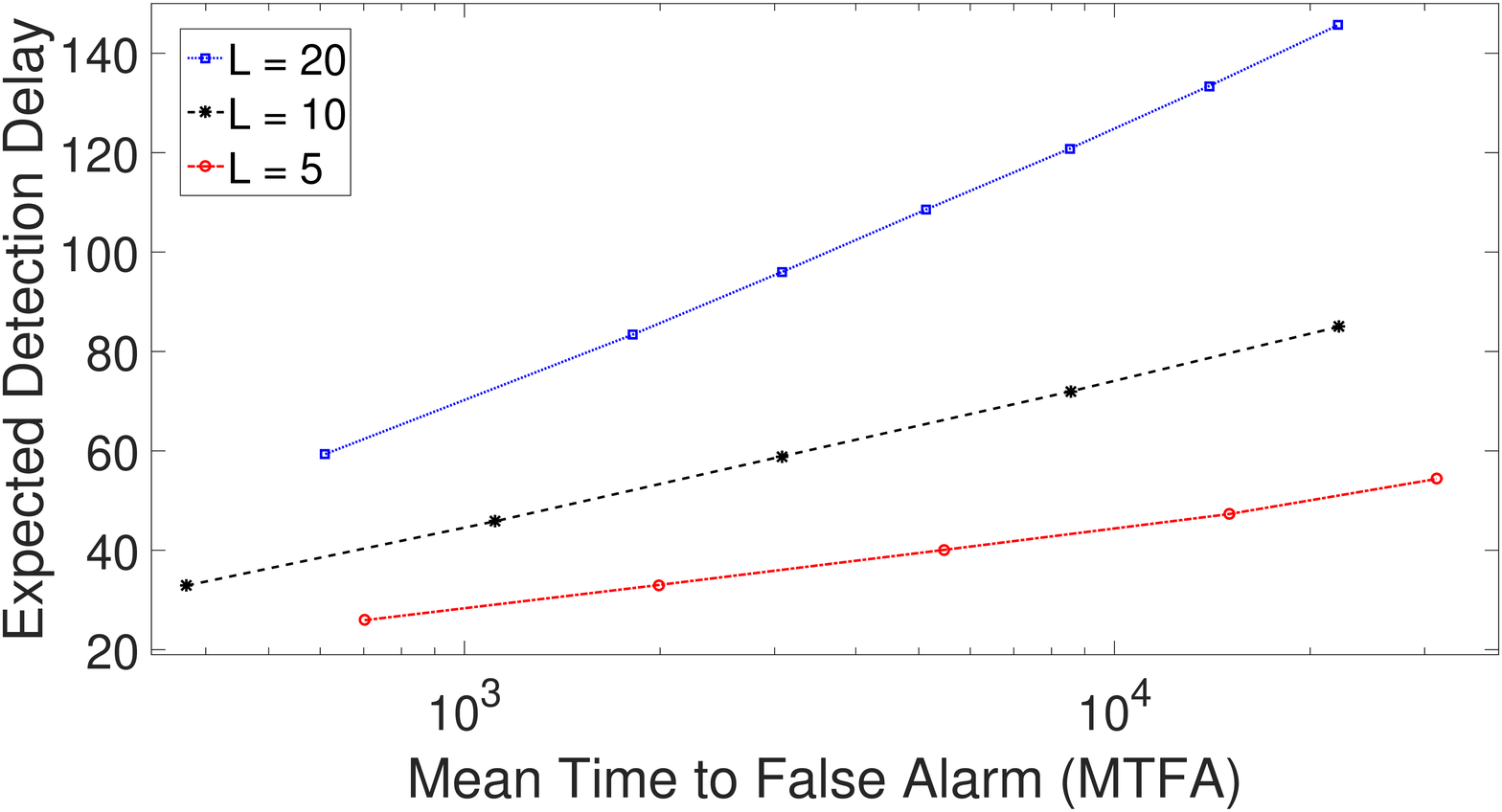,width=0.4\textwidth}}
}
\caption{$\mathrm{WADD}$ versus MTFA for homogeneous sensor network.}
\label{fig:FIGS2}
\end{figure*}

\begin{figure*}[t]
\centering
\subfigure[$\mathrm{WADD}$ versus MTFA for $L=10$, $m=1$.]{
\label{fig:hetero_1}
\mbox{
\epsfig{file=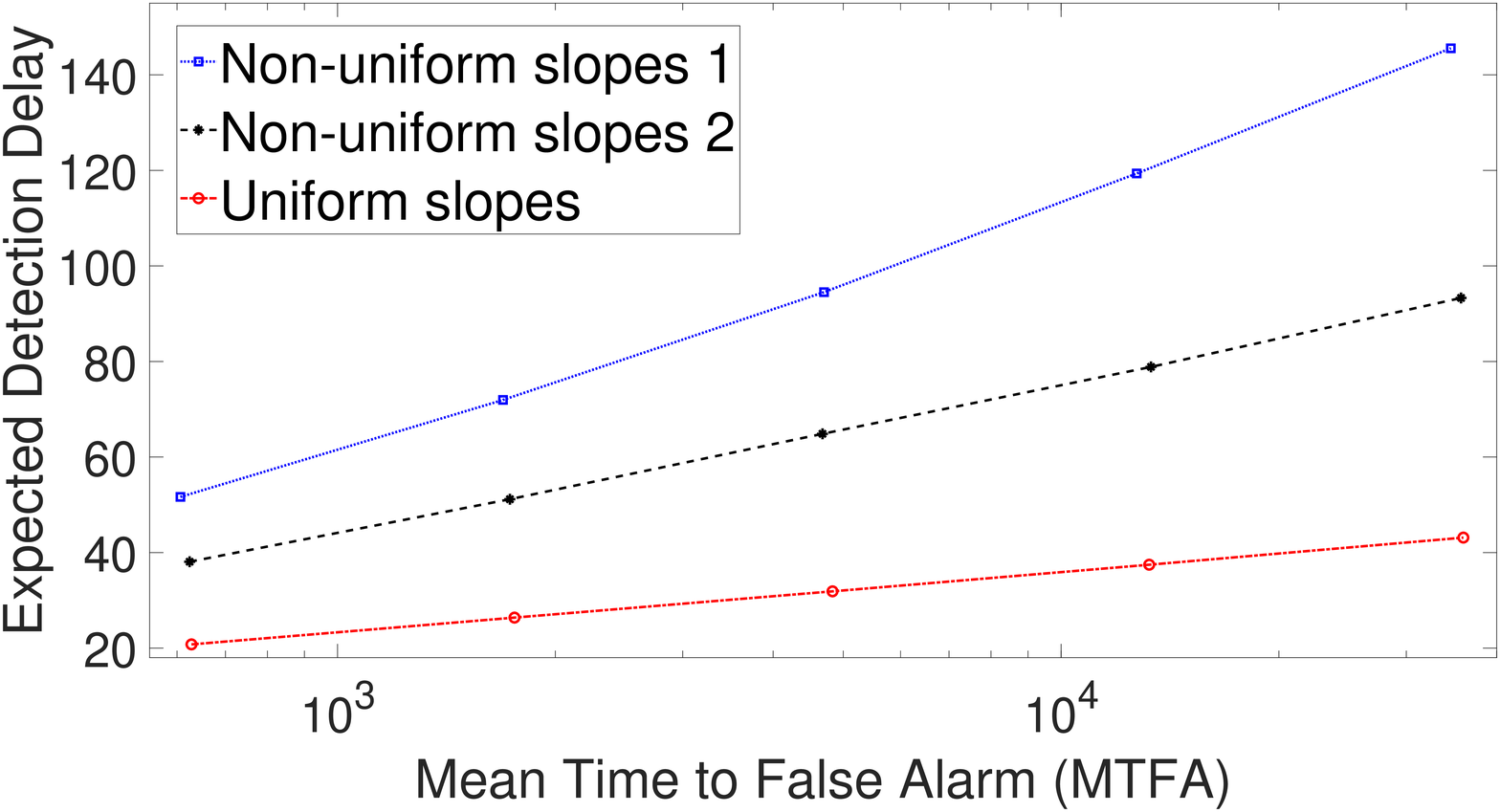,width=0.4\textwidth}}
}
\hspace{0.1in}
\subfigure[$\mathrm{WADD}$ versus MTFA for $L=20$, $m=1$.]{
\label{fig:hetero_2}
\mbox{
\epsfig{file=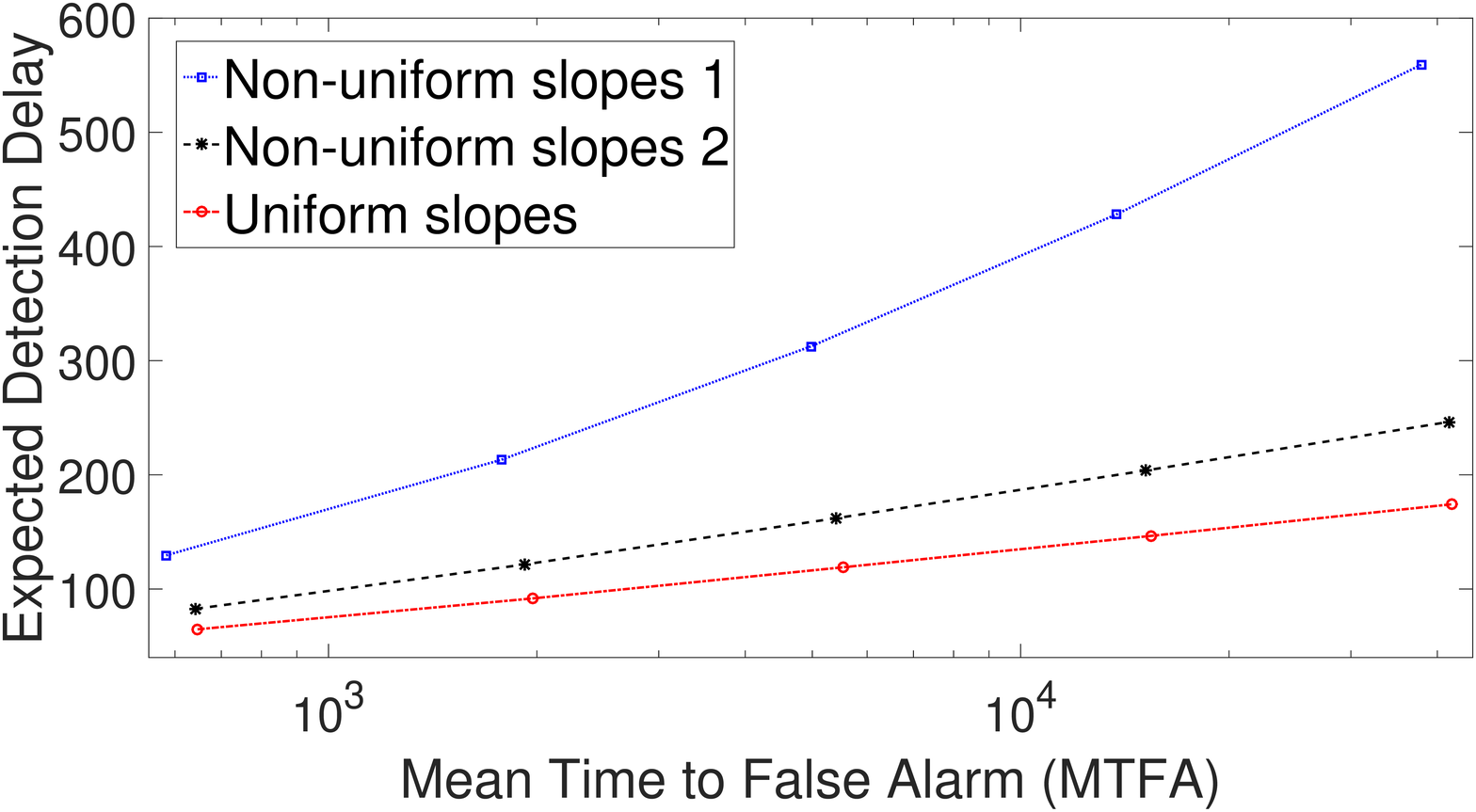,width=0.4\textwidth}}
}
\caption{$\mathrm{WADD}$ versus MTFA for heterogeneous sensor network.}
\label{fig:FIGS3}
\end{figure*}
For the case of a heterogeneous sensor network, we compare three versions of the test introduced in eqs. \eqref{eq:test_stat} - \eqref{eq:test_rec}: the first version (``Uniform slopes'' in Fig. \ref{fig:FIGS3}) uses the optimal weights $\bm{\alpha}^*$ to achieve a uniform average statistic drift among anomaly placements (see Lemma \ref{equal_drifts_lemma}); the second and third versions (``Non-uniform slopes 1'' and ``Non-uniform slopes 2'' in Figs. \ref{fig:FIGS3}) use arbitrary choices of weights that only guarantee that the expected drift of the statistic is positive for any placement of the anomaly. The optimal weights are found by using gradient descent with the derivatives calculated as in eq. \eqref{eq:eksiswsoyla}. Note that each derivative depends is equal to a difference of two expected values, which we calculate through Monte Carlo. Furthermore, It should be noted that the $\mathrm{WADD}$ in the case of heterogeneous sensor networks is calculated approximately, since the worst path of the anomaly cannot be specified analytically. However, as the MTFA becomes large, $\mathrm{WADD}$ can be approximated by placing the anomalies at only the nodes (in this case node since $m=1$) that correspond to the worst post-change expected drift. For the optimal weight choice, the placement of the anomaly does not affect the delay for large MTFA, since the expected drift does not depend on the trajectory of the anomaly. 

We consider the cases of $L=10$ and $L=20$. For the case of $L=10$, we assume that $g_\ell=\mathcal{N}(0,1)$ for all $\ell \in [L]$, and that $f_\ell = \mathcal{N}(\mu_\ell,1)$ with $\bm{\mu} = [1, 1.1, 1.2, 1.3, 1.4, 1.5, 1.6, 1.7, 1.8, 1.9]^\top$ denoting the vector of the anomalous means. The results can be seen in Fig. \ref{fig:hetero_1}. The M-CUSUM test statistic using optimal weights is then characterized by a uniform average statistic drift, approximately equal to $0.178$. For the case of ``Non-uniform slopes 1'' the worst expected drift corresponds to placing the anomaly at sensor 2, corresponding to an approximate slope of $0.029$, and for the case of ``Non-uniform slopes 2'' at sensor 5, with an approximate slope of $0.065$. We see that he mixture-CUSUM test using the optimal weights $\bm{\alpha}^*$ outperforms the other two implementations.  Similar results can be produced by considering the case of $L=20$. For that case, we assume that $g_\ell=\mathcal{N}(0,1)$ for all $\ell \in [L]$, $f_\ell=\mathcal{N}(0.8,1)$ for all $1 \leq \ell \leq 5 $, $f_\ell=\mathcal{N}(1,1)$ for all $6 \leq \ell \leq 15 $, and $f_\ell=\mathcal{N}(1.2,1)$ for all $16 \leq \ell \leq 20 $. The results can be seen in Fig. \ref{fig:hetero_2}, where we note that the optimal weights test outperforms the tests that use arbitrarily chosen weights. The resulting homogeneous average statistic drift is then approximately equal to $0.036$. Furthermore, for the case of ``Non-uniform slopes 1'' the worst expected drift corresponds to placing the anomaly at any sensor $\ell \in [5]$, corresponding to an approximate slope of $0.003$, and for the case of ``Non-uniform slopes 2'' at any sensor $ \ell \in \{16,17,18,19,20\}$, with an approximate slope equal to $0.023$. Finally, it should be noted that in this case we have chosen ``Non-uniform slopes 1'' to correspond to the case of uniform weights. As a result, the gap between the blue and red lines in Fig. \ref{fig:hetero_2} captures the loss we suffer if we make the assumption that the sensors of the network are homogeneous.
\section{Conclusion}
\label{sec:conc}
In this paper, we studied the problem of moving anomaly detection, where an anomaly evolves around a sensor network affecting different nodes at each time instant after its appearance. We posed the problem into a minimax QCD setting, where the trajectory of the anomaly is supposed to be \textit{unknown} but \textit{deterministic}. To this end, we introduced a modified version of Lorden's \cite{lorden:1971} detection delay metric that evaluates candidate detection schemes according to the worst performance with respect to the path of the anomaly. We proposed a CUSUM-type test that is an exact solution to the moving anomaly QCD problem for the case of a homogeneous network, and is also first-order asymptotically optimal when applied to a heterogeneous network. Due to the lack of a specific anomaly evolution model and the use of a worst-path delay approach, our exact test structure depends on the structure of the network, in particular the data-generating pdfs at each sensor.

Future work in this area includes studying the case of a moving anomaly of size varying with time (for current progress in this problem see \cite{Rovatsos_ICASSP:2020}),  modifying proposed procedures to provide robustness with respect to limited knowledge of data-generating distributions, as well as, studying the case of moving anomaly detection under the presence of an adversary.

\appendix
\begin{lemma}
\label{thm:lemma_trunc}
For any stopping time $\tau$ adapted to $\mathscr{F}$ and $N>0$ define the truncated version of $\tau$ by $\tau^{(N)} \triangleq \min\{\tau, N\}$. We then have that
\begin{align}
\label{eq:math_4}
\mathrm{WADD}(\tau^{(N)}) \leq \mathrm{WADD}(\tau).
\end{align}
\end{lemma}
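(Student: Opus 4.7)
The plan is to fix an anomaly trajectory $\bm{S}$ and a changepoint $\nu$, and show that the quantity inside the definition of $\mathrm{WADD}(\tau^{(N)})$ is bounded above by $\mathrm{WADD}(\tau)$ pointwise in $(\bm{S},\nu)$. Taking the two outer suprema then yields the result. The analysis splits into two cases depending on whether $N$ exceeds $\nu$ or not, which correspond to the two possible behaviors of the conditioning event $\{\tau^{(N)} > \nu\}$.

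First I would record the obvious pointwise bound $\tau^{(N)} \leq \tau$, and then examine $\{\tau^{(N)} > \nu\}$. If $N \leq \nu$, then $\tau^{(N)} \leq N \leq \nu$ everywhere, so $\mathbb{P}_\nu^{\bm{S}}(\tau^{(N)} > \nu) = 0$ and the convention stated after \eqref{eq:delay_metric} forces $\mathbb{E}_\nu^{\bm{S}}[\tau^{(N)}-\nu\mid\tau^{(N)}>\nu,\mathscr{F}_\nu] = 1$. If instead $N > \nu$, then on $\{\tau>\nu\}$ we have $\tau^{(N)} = \min(\tau,N) \geq \min(\nu+1,N) > \nu$, and on $\{\tau\leq\nu\}$ we have $\tau^{(N)}\leq\tau\leq\nu$; therefore $\{\tau^{(N)}>\nu\} = \{\tau>\nu\}$ exactly.

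In the case $N>\nu$, on the common conditioning event $\tau^{(N)}-\nu = \min(\tau,N)-\nu \leq \tau-\nu$, so taking conditional expectation with respect to $\mathscr{F}_\nu$ on this event gives the almost-sure inequality
\begin{equation*}
\mathbb{E}_\nu^{\bm{S}}[\tau^{(N)}-\nu\mid\tau^{(N)}>\nu,\mathscr{F}_\nu] \leq \mathbb{E}_\nu^{\bm{S}}[\tau-\nu\mid\tau>\nu,\mathscr{F}_\nu],
\end{equation*}
and the essential supremum preserves this inequality, so the left-hand side is bounded by $\mathrm{WADD}(\tau)$. In the case $N\leq\nu$, the left-hand side equals $1$, and I would bound this by $\mathrm{WADD}(\tau)$ by exhibiting $1$ as a trivial lower bound of $\mathrm{WADD}(\tau)$: taking $\nu=0$ in the definition and using the fact that $\tau\geq 1$ and $\mathscr{F}_0$ is trivial gives $\mathbb{E}_0^{\bm{\tilde S}}[\tau]\geq 1$ for any trajectory $\bm{\tilde S}$, hence $\mathrm{WADD}(\tau)\geq 1$.

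Combining the two cases, for every $(\bm{S},\nu)$ the essential supremum appearing in \eqref{eq:delay_metric} for $\tau^{(N)}$ is at most $\mathrm{WADD}(\tau)$, and taking $\sup_\nu$ followed by $\sup_{\bm{S}}$ yields \eqref{eq:math_4}. The only delicate point is the reconciliation of the $\triangleq 1$ convention with the bound one is trying to establish; the key observation that sidesteps this is the universal lower bound $\mathrm{WADD}(\tau)\geq 1$, which I view as the main (though mild) obstacle in making the argument rigorous.
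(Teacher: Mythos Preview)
Your proposal is correct and follows essentially the same approach as the paper's proof: both split into the cases $N>\nu$ and $N\leq\nu$, identify $\{\tau^{(N)}>\nu\}=\{\tau>\nu\}$ in the first case to deduce the conditional-expectation inequality, and invoke the $\triangleq 1$ convention in the second. The only cosmetic difference is that in the $N\leq\nu$ case the paper bounds the constant $1$ by the same-$(\bm{S},\nu)$ quantity $\mathbb{E}_\nu^{\bm{S}}[\tau-\nu\mid\tau>\nu,\mathscr{F}_\nu]\geq 1$ (yielding a pointwise-in-$(\bm{S},\nu)$ inequality before taking suprema), whereas you bound it directly by $\mathrm{WADD}(\tau)\geq 1$; both routes give the lemma.
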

\begin{proof}
Fix $\nu \geq 0$. Consider initially that $N > \nu$. Then, since $\{\tau^{(N)} > \nu\} = \{\min\{\tau,N\} > \nu \} = \{\tau > \nu\} \cap \{N> \nu\}$, we have that $\{\tau^{(N)}> \nu \} = \{\tau> \nu\}.$ Since $\tau^{(N)} \leq \tau$, this implies that for any $N > \nu$ and any $\bm{S}$ we have that
\begin{align}
\label{eq:math_1}
&\mathbb{E}_\nu^{\bm{S}} \left[\tau^{(N)}-\nu |\tau_N> \nu , \mathscr{F}_{\nu  } \right]
=\mathbb{E}_\nu^{\bm{S}} \left[\tau^{(N)}-\nu  |\tau >\nu , \mathscr{F}_{\nu } \right]
\leq \mathbb{E}_\nu^{\bm{S}} \left[\tau-\nu  |\tau>\nu , \mathscr{F}_{\nu  } \right].
\end{align}
For the case of $N\leq\nu$, we have that that $\mathbb{P}_\nu^{\bm{S}}(\tau^{(N)} > \nu)=0$, which implies that by convention for any $N \leq \nu$ and any $\bm{S}$ we have that
\begin{align}
\label{eq:math_2}
\mathbb{E}_\nu^{\bm{S}} \left[\tau^{(N)}-\nu  |\tau^{(N)} > \nu , \mathscr{F}_{\nu  } \right]=1.
\end{align}
Furthermore, note that for any $\bm{S}$ we have that
\begin{align}
\label{eq:math_3}
\mathbb{E}_\nu^{\bm{S}} \left[\tau -\nu  |\tau > \nu , \mathscr{F}_{\nu} \right]\geq 1.
\end{align}
From \eqref{eq:math_1} - \eqref{eq:math_3} we have that for any $\nu \geq 0$ and  any $\bm{S}$
\begin{align}
\mathbb{E}_\nu^{\bm{S}} \left[\tau^{(N)} -\nu  |\tau^{(N)} > \nu , \mathscr{F}_{\nu} \right] \leq \mathbb{E}_\nu^{\bm{S}} \left[\tau -\nu  |\tau > \nu , \mathscr{F}_{\nu} \right].
\end{align}
By taking the sup and ess sup on both sides the lemma is established.
\end{proof}
\begin{lemma}
\label{converge}
Let $A>0$, $\tau$ a stopping time adapted to $\mathscr{F}$ such that $\mathbb{E}_\infty[\tau] < \infty$, and $\Phi : \mathbb{R} \mapsto \mathbb{R}$ a function satisfying $|\Phi(x)| \leq A \text{ for all } x \in \mathbb{R}$. Then for any $\bm{\lambda} \in \mathcal{A}$ we have that
\begin{align}
\lim\limits_{N\rightarrow \infty} \mathbb{E}_\infty \left[ \sum\limits_{k=0}^{\tau^{(N)}-1} \Phi(W_{\bm{\lambda}}[k])\right] =  \mathbb{E}_\infty \left[ \sum\limits_{k=0}^{\tau-1} \Phi(W_{\bm{\lambda}}[k])\right].
\end{align}
\end{lemma}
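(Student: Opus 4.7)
The plan is to reduce the statement to a direct application of the dominated convergence theorem (DCT) under $\mathbb{P}_\infty$, using the finite-MTFA hypothesis $\mathbb{E}_\infty[\tau]<\infty$ to produce the required integrable dominating random variable.

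First I would rewrite the two sums as sums over all nonnegative integers weighted by indicators of the stopping events, namely
\begin{align*}
\sum_{k=0}^{\tau^{(N)}-1}\Phi(W_{\bm{\lambda}}[k]) \;=\; \sum_{k=0}^{\infty}\Phi(W_{\bm{\lambda}}[k])\,\mathbb{I}\{k<\tau^{(N)}\},\qquad \sum_{k=0}^{\tau-1}\Phi(W_{\bm{\lambda}}[k]) \;=\; \sum_{k=0}^{\infty}\Phi(W_{\bm{\lambda}}[k])\,\mathbb{I}\{k<\tau\}.
\end{align*}
Because $\mathbb{E}_\infty[\tau]<\infty$ implies $\tau<\infty$ a.s.~under $\mathbb{P}_\infty$, we have $\tau^{(N)}=\min\{\tau,N\}\uparrow\tau$ pointwise a.s., so $\mathbb{I}\{k<\tau^{(N)}\}\to \mathbb{I}\{k<\tau\}$ pointwise for every $k$; summing term-by-term then gives a.s.~convergence of the partial-sum random variables as $N\to\infty$.

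Next I would produce the integrable dominating random variable: since $|\Phi|\leq A$, the boundedness of $\Phi$ together with the trivial comparison $\tau^{(N)}\leq \tau$ yields
\begin{equation*}
\left|\sum_{k=0}^{\tau^{(N)}-1}\Phi(W_{\bm{\lambda}}[k])\right|\;\leq\;A\,\tau^{(N)}\;\leq\;A\,\tau,
\end{equation*}
and $\mathbb{E}_\infty[A\tau]=A\,\mathbb{E}_\infty[\tau]<\infty$ by hypothesis. Dominated convergence then delivers exactly
\begin{equation*}
\lim_{N\to\infty}\mathbb{E}_\infty\!\left[\sum_{k=0}^{\tau^{(N)}-1}\Phi(W_{\bm{\lambda}}[k])\right]=\mathbb{E}_\infty\!\left[\sum_{k=0}^{\tau-1}\Phi(W_{\bm{\lambda}}[k])\right],
\end{equation*}
as claimed.

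There is no real obstacle: the only two ingredients are (i) a.s.~finiteness of $\tau$ under $\mathbb{P}_\infty$, which follows from $\mathbb{E}_\infty[\tau]<\infty$, and (ii) the uniform bound $|\Phi|\leq A$, which furnishes $A\tau$ as an integrable envelope. The proof is essentially bookkeeping around DCT, so I expect it to be short; the only item worth flagging is using the convention $\sum_{j=k_2}^{k_1}a[j]\triangleq 0$ for $k_2>k_1$ (stated in Sec.~\ref{sec:model}) to handle the degenerate case $\tau^{(N)}=0$ when $N=0$, which is immediate and does not affect the limit.
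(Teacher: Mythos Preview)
Your proof is correct. It differs from the paper's in the mechanism used to pass to the limit: the paper splits $\sum_{k=0}^{\tau-1}\Phi(W_{\bm\lambda}[k])=\sum_{k=0}^{\tau^{(N)}-1}\Phi(W_{\bm\lambda}[k])+\sum_{k=\tau^{(N)}}^{\tau-1}\Phi(W_{\bm\lambda}[k])$, bounds the absolute value of the expectation of the remainder by $A\,\mathbb{E}_\infty[(\tau-N)^+]=A\sum_{j\geq N}\mathbb{P}_\infty(\tau>j)$, and then uses $\mathbb{E}_\infty[\tau]=\sum_{j\geq 0}\mathbb{P}_\infty(\tau>j)<\infty$ to conclude this tail vanishes. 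You instead invoke the dominated convergence theorem directly with envelope $A\tau$. Both arguments rest on exactly the same two hypotheses (boundedness of $\Phi$ and $\mathbb{E}_\infty[\tau]<\infty$); your route is slightly more streamlined, while the paper's is more explicit and avoids naming DCT. As a minor remark, your closing comment about ``$\tau^{(N)}=0$ when $N=0$'' is unnecessary: $N$ ranges over positive integers tending to infinity, and in any case the empty-sum convention handles all degenerate indices automatically.
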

\begin{proof}
Note that since $\tau \geq \tau^{(N)}$ we have that
\begin{align}
\label{eq:leme3}
\mathbb{E}_\infty \left[ \sum\limits_{k=0}^{\tau-1} \Phi(W_{\bm{\lambda}}[k])\right] = \mathbb{E}_\infty \left[ \sum\limits_{k=0}^{\tau^{(N)}-1} \Phi(W_{\bm{\lambda}}[k])\right]+\mathbb{E}_\infty \left[ \sum\limits_{k=\tau^{(N)}}^{\tau-1} \Phi(W_{\bm{\lambda}}[k])\right].
\end{align}
Furthermore, note that by using Jensen's and triangle inequalities together with the assumption that $\Phi(x)$ is bounded we have that
\begin{align}
\label{eq:lemeq}
&\mathbb{E}_\infty \left[ \sum\limits_{k=\tau^{(N)}}^{\tau-1} \Phi(W_{\bm{\lambda}}[k])\right]  \leq\bigg|\mathbb{E}_\infty \left[ \sum\limits_{k=\tau^{(N)}}^{\tau-1} \Phi(W_{\bm{\lambda}}[k])\right]\bigg|  \leq \mathbb{E}_\infty \left[ \sum\limits_{k=\tau^{(N)}}^{\tau-1} \bigg|\Phi(W_{\bm{\lambda}}[k])\bigg|\right]\nonumber \\&\leq A \mathbb{E}_\infty[ \tau-\tau^{(N)}] = A\mathbb{E}_\infty[(\tau-N)^+].
\end{align}
By properties of the expectation of positive random variables, we then note that 
\begin{align}
&\mathbb{E}_\infty[(\tau-N)^+]= \sum\limits_{j=0}^\infty \mathbb{P}_\infty((\tau-N)^+ >j) \nonumber \\&= \sum\limits_{j=0}^\infty\mathbb{P}_\infty(\tau>j+N) = \sum\limits_{j=N}^\infty\mathbb{P}_\infty(\tau>j) 
\end{align}
which since, by assumption, $\mathbb{E}_\infty[\tau] = \sum\limits_{j=0}^\infty\mathbb{P}_\infty(\tau>j) <\infty$ implies that
\begin{align}
\lim_{N\rightarrow\infty}\mathbb{E}_\infty[(\tau-N)^+]=\lim_{N\rightarrow\infty}\mathbb{P}_\infty(\tau>N) =0.
\end{align}
As a result, from \eqref{eq:lemeq} we have that 
\begin{align}
\label{eq:lemeq_2}
\lim_{N\rightarrow\infty}\mathbb{E}_\infty \left[ \sum\limits_{k=\tau^{(N)}}^{\tau-1} \Phi_{\bm{\lambda}}(W[k])\right] =0.
\end{align}
After taking the limit in both sides of \eqref{eq:leme3} and using eq. \eqref{eq:lemeq_2} the lemma is established.
\end{proof}
\begin{proof}[Proof of Theorem 1]
Fix $\bm{\alpha} \in \mathcal{A}$. Due to the presence of the sup and ess sup in \eqref{eq:delay_metric}, we have that for any path $\bm{S}$, $\nu \geq 0$, $\mathscr{F}$-adapted stopping time $\tau$ and $N>0$
\begin{align}
\label{eq:eq1}
\mathrm{WADD}(\tau^{(N)}) &\geq \mathbb{E}_\nu^{\bm{S}} \left[ \tau^{(N)}-\nu | \tau^{(N)}> \nu , \mathscr{F}_{\nu} \right] 
= \mathbb{E}_\nu^{\bm{S}} \left[ \sum_{j=\nu}^\infty \mathbbm{1}_{\{\tau^{(N)} > j\}} \bigg|\tau^{(N)} > \nu, \mathscr{F}_{\nu}  \right] \nonumber \\& \stackrel{(a)}{=}  \mathbb{E}_\infty \left[ \sum_{j=\nu}^\infty\Gamma_{\bm{S}}(j,\nu)  \mathbbm{1}_{\{\tau^{(N)}> j\}} \bigg|\tau^{(N)} > \nu, \mathscr{F}_{\nu}  \right]
\end{align}
where $(a)$ follows by changing the measure to $\mathbb{P}_\infty(\cdot)$. By multiplying both sides of the inequality \eqref{eq:eq1} with $\mathbbm{1}_{ \{ \tau^{(N)} > \nu \} }(1-W_{\bm \alpha}[\nu])^+$ and taking the expected value under $\mathbb{E}_\infty[\cdot]$ we have that
\begin{align}
\label{eq:eq2}
&\mathbb{E}_\infty \nonumber \left[\mathbbm{1}_{ \{ \tau^{(N)} > \nu \} }(1-W_{\bm \alpha}[\nu])^+\mathrm{WADD}(\tau^{(N)})\right] 
\nonumber
\\ \nonumber &\geq \mathbb{E}_\infty\left[ \mathbbm{1}_{ \{ \tau^{(N)}> \nu \} }(1-W_{\bm \alpha}[\nu])^+\mathbb{E}_\infty \left[ \sum_{j=\nu}^\infty\Gamma_{\bm{S}}(j,\nu)  \mathbbm{1}_{\{\tau^{(N)}> j\}} \bigg| \tau^{(N)} > \nu, \mathscr{F}_{\nu} \right]\right] 
\\& 
\stackrel{(b)}{=}  \mathbb{E}_\infty  \left[ \mathbb{E}_\infty  \left[\mathbbm{1}_{ \{ \tau^{(N)}> \nu \} }(1-W_{\bm \alpha}[\nu])^+\sum_{j=\nu}^\infty\Gamma_{\bm{S}}(j,\nu)  \mathbbm{1}_{\{\tau^{(N)}>j\}}\bigg| \tau^{(N)} > \nu, \mathscr{F}_{\nu}\right]\right]\nonumber 
\\& 
\stackrel{(c)}{=}  \mathbb{E}_\infty \left[ \sum_{j=\nu}^\infty\mathbbm{1}_{ \{ \tau^{(N)} >\nu \} }(1-W_{\bm \alpha}[\nu])^+\Gamma_{\bm{S}}(j,\nu)  \mathbbm{1}_{\{\tau^{(N)} > j\}}\right].
\end{align}
where $(b)$ follows since $\mathbbm{1}_{ \{ \tau^{(N)}> \nu \} }(1-W_{\bm \alpha}[\nu])^+$ is $\mathscr{F}_\nu$-measurable and, hence, can go inside the expectation since the conditioning is with respect to $\mathscr{F}_\nu$, and (c) follows from the tower property of expectations. 
By summing over $\nu$ from $\nu =0$ to $\nu = N$, and due to the linearity of expectation and the fact that $\tau^{(N)} \leq N$ we have that
\begin{align}
&\mathbb{E}_\infty \nonumber \left[\sum\limits_{\nu=0}^{\tau^{(N)}-1}\mathbbm{1}_{ \{ \tau^{(N)} > \nu \} }(1-W_{\bm \alpha}[\nu])^+\mathrm{WADD}(\tau^{(N)})\right] 
\nonumber \\& 
 \geq 
 \mathbb{E}_\infty \left[ \sum\limits_{\nu=0}^{\tau^{(N) }-1}\sum_{j=\nu}^\infty\mathbbm{1}_{ \{ \tau^{(N)} > \nu \} }(1-W_{\bm \alpha}[\nu])^+\Gamma_{\bm{S}}(j,\nu)  \mathbbm{1}_{\{\tau^{(N)} > j\}}\right],
\end{align}
which in turn, implies that
\begin{align}
\nonumber &\mathbb{E}_\infty \nonumber \left[\sum\limits_{\nu=0}^{\tau^{(N)} - 1}(1-W_{\bm \alpha}[\nu])^+\mathrm{WADD}(\tau^{(N)})\right] 
\geq \mathbb{E}_\infty \left[ \sum\limits_{\nu=0}^{\tau^{(N)}-1}\sum_{j=\nu}^{\tau^{(N)}-1}(1-W_{\bm \alpha}[\nu])^+\Gamma_{\bm{S}}(j,\nu)  \right]
\nonumber 
\\& \stackrel{(d)}{=} \mathbb{E}_\infty \left[ \sum\limits_{j=0}^{\tau^{(N)}-1}\sum\limits_{\nu=0}^{j}(1-W_{\bm \alpha}[\nu])^+\Gamma_{\bm{S}}(j,\nu)  \right]
\end{align}
where $(d)$ follows after changing the order of the summation. Since $\mathrm{WADD}(\tau^{(N)})$ is a constant, and therefore can go outside of the expectation, we then have that 
\begin{align}
\label{eq:eq5} 
 &\mathrm{WADD}(\tau^{(N)}) \geq \frac{ \mathbb{E}_\infty \left[ \sum\limits_{j=0}^{\tau^{(N)}-1}\sum\limits_{\nu=0}^{j}(1-W_{\bm \alpha}[\nu])^+\Gamma_{\bm{S}}(j,\nu)  \right]}{\mathbb{E}_\infty  \left[\sum\limits_{\nu=0}^{\tau^{(N)}-1}(1-W_{\bm \alpha}[\nu])^+\right] } .
\end{align}
By taking the sup with respect to $\bm{S}$, and since the right hand side fraction depends on $\bm{S}$ only through $\bm{S}[1, N-1]$ we have that
\begin{align}
 &\mathrm{WADD}(\tau^{(N)}) \geq \sup\limits_{\bm{S}[1, N-1]}\frac{ \mathbb{E}_\infty \left[ \sum\limits_{j=0}^{\tau^{(N)}-1}\sum\limits_{\nu=0}^{j}(1-W_{\bm \alpha}[\nu])^+\Gamma_{\bm{S}}(j,\nu)  \right]}{\mathbb{E}_\infty  \left[\sum\limits_{\nu=0}^{\tau^{(N)}-1}(1-W_{\bm \alpha}[\nu])^+\right] }.
\end{align}
Since the denominator in the right hand side does not depend on $\bm{S}$, we have that
\begin{align}
\label{eq:eq5b} 
 &\mathrm{WADD}(\tau^{(N)}) \geq \frac{\sup\limits_{\bm{S}[1, N-1]} \mathbb{E}_\infty \left[ \sum\limits_{j=0}^{\tau^{(N)}-1}\sum\limits_{\nu=0}^{j}(1-W_{\bm \alpha}[\nu])^+\Gamma_{\bm{S}}(j,\nu)  \right]}{\mathbb{E}_\infty  \left[\sum\limits_{\nu=0}^{\tau^{(N)}-1}(1-W_{\bm \alpha}[\nu])^+\right] }.
\end{align}
To proceed, we further bound the numerator in \eqref{eq:eq5b}. For $1 \leq n < N$, define the following function
 \begin{align}
 \label{eq:def_phi}
 &\Phi_{n,N-1}(\bm{S}[1,{n-1}],\bm{S}[n+1,N-1]) \triangleq \sup\limits_{\bm{S}[n] } \mathbb{E}_\infty \left[ \sum\limits_{j=0}^{N-1}\sum\limits_{\nu=0}^{j}(1-W_{\bm \alpha}[\nu])^+\Gamma_{\bm{S}}(j,\nu) \mathbbm{1}_{\{\tau^{(N)} > j\}} \right] .
 \end{align}
 Then, by first taking the sup over $\bm{S}[n]$ we have that
\begin{align}
\label{eq:eq7}
&\sup\limits_{\bm{S}[1,N-1]}  \mathbb{E}_\infty \left[ \sum\limits_{j=0}^{N-1}\sum\limits_{\nu=0}^{j}(1-W_{\bm \alpha}[\nu])^+\Gamma_{\bm{S}}(j,\nu) \mathbbm{1}_{\{\tau^{(N)} > j\}} \right]
\nonumber\\ \nonumber & = \sup\limits_{\bm{S}[1,n-1] ,\bm{S}[n+1,N-1]} \left[\sup\limits_{\bm{S}[n]}   \mathbb{E}_\infty \left[ \sum\limits_{j=0}^{N-1}\sum\limits_{\nu=0}^{j}(1-W_{\bm \alpha}[\nu])^+\Gamma_{\bm{S}}(j,\nu) \mathbbm{1}_{\{\tau^{(N)} > j\}} \right]\right] \nonumber 
 \nonumber  \\& =
   \sup\limits_{\bm{S}[1,n-1] ,\bm{S}[n+1,N-1]} \Phi_{n,N-1}(\bm{S}[1,{n-1}],\bm{S}[n+1,N-1]).
 \end{align}
Note that under $\mathbb{P}_\infty(\cdot)$ and for $j$ such that $0 \leq j < n < N$ we have that 
  \begin{align}
\label{eq:eq8}
\sum\limits_{\nu=0}^{j}(1-W_{\bm \alpha}[\nu])^+\Gamma_{\bm{S}}(j,\nu) \mathbbm{1}_{\{\tau^{(N)} > j\}}
\end{align}
is independent of $\bm{S}[n]$. For $0 \leq n \leq j < N$ we have that 
  \begin{align}
\label{eq:eq9}
&\sum\limits_{\nu=0}^{j}(1-W_{\bm \alpha}[\nu])^+\Gamma_{\bm{S}}(j,\nu) \mathbbm{1}_{\{\tau^{(N)} > j\}}\nonumber
 =\sum\limits_{\nu=0}^{n-1}(1-W_{\bm \alpha}[\nu])^+\Gamma_{\bm{S}}(j,\nu) \mathbbm{1}_{\{\tau^{(N)} > j\}} 
 \\& \nonumber
 + \sum\limits_{\nu=n}^{j}(1-W_{\bm \alpha}[\nu])^+\Gamma_{\bm{S}}(j,\nu) \mathbbm{1}_{\{\tau^{(N)} > j\}}
 \nonumber \\&=
\Gamma_{\bm{S}}(n,n-1)\left(\sum_{\nu=0}^{n-1}(1-W_{\bm \alpha}[\nu])^+ \left(\prod\limits_{\substack{i=\nu+1\\i\neq n}}^{j}\Gamma_{\bm{S}}(i,i-1)\right)  \mathbbm{1}_{\{\tau^{(N)} > j\}} \right)
\nonumber\\&+ \sum\limits_{\nu=n}^{j}(1-W_{\bm \alpha}[\nu])^+\Gamma_{\bm{S}}(j,\nu) \mathbbm{1}_{\{\tau^{(N)} > j\}},
\end{align}
where under $\mathbb{P}_\infty(\cdot)$ the dependence from $\bm{S}[n]$ is only through the likelihood ratio $\Gamma_{\bm{S}}(n,n-1)$ of the first term. 

For $0 \leq j < N$ and $0 \leq n < N$ define
\begin{align}
\label{eq:def1}
A_{j,n} = \left(\sum_{\nu=0}^{n-1}(1-W_{\bm \alpha}[\nu])^+ \left(\prod\limits_{\substack{i=\nu+1\\i\neq n}}^{j}\Gamma_{\bm{S}}(i,i-1)\right)  \mathbbm{1}_{\{\tau^{(N)} > j\}} \right)\mathbbm{1}_{\{j\geq n\}}
\end{align}
and
\begin{align}
\label{eq:def2}
B_{j,n} &\triangleq  \left(\sum\limits_{\nu=0}^{j}(1-W_{\bm \alpha}[\nu])^+\Gamma_{\bm{S}}(j,\nu) \mathbbm{1}_{\{\tau^{(N)} > j\}} \right)\mathbbm{1}_{\{j < n\}} \nonumber \\& +  \bigg(\sum\limits_{\nu=n}^{j}(1-W_{\bm \alpha}[\nu])^+\Gamma_{\bm{S}}(j,\nu) \mathbbm{1}_{\{\tau^{(N)} > j\}} \bigg)\mathbbm{1}_{\{j\geq n\}}.
\end{align}

As a result, from eqs. \eqref{eq:eq9} - \eqref{eq:def2} we have that for any $0\leq n < N$
\begin{align}
\label{eq:conc}
 &\sum\limits_{\nu=0}^{j}(1-W_{\bm \alpha}[\nu])^+\Gamma_{\bm{S}}(j,\nu) \mathbbm{1}_{\{\tau^{(N)} > j\}}
= \Gamma_{\bm{S}}(n,n-1)A_{j,n} + B_{j,n}.
\end{align}
Then from eqs. \eqref{eq:def_phi}, \eqref{eq:conc} we have that
\begin{align}
\label{eq:eq10}
&\Phi_{n,N-1}(\bm{S}[1,{n-1}],\bm{S}[n+1,N-1]) =\sup\limits_{\bm{S}[n]}   \mathbb{E}_\infty \bigg[ \sum_{j=0}^{N-1} \bigg(\Gamma_{\bm{S}}(n,n-1) A_{j,n}+ B_{j,n} \bigg)\bigg] \nonumber \\&= \sup\limits_{\bm{S}[n]}   \mathbb{E}_\infty \bigg[  \Gamma_{\bm{S}}(n,n-1)  \sum_{j=0}^{N-1} A_{j,n} + \sum_{j=0}^{N-1}B_{j,n} \bigg].
\end{align}
Note that since $A_{j,n}$ and $B_{j,n}$ are independent of $\bm{S}[n]$ under $\mathbb{P}_\infty(\cdot)$, we have that for all $\bm E \in \mathcal{E}$
\begin{align}
\label{eq:eq11}
& \sup\limits_{\bm{S}[n]}   \mathbb{E}_\infty \bigg[  \Gamma_{\bm{S}}(n,n-1)  \sum_{j=0}^{N-1} A_{j,n} + \sum_{j=0}^{N-1}B_{j,n} \bigg] 
= \sup\limits_{\bm{S}[n]}   \mathbb{E}_\infty \left[ \left( \prod\limits_{\ell \, \in \, \bm{S}[n]} \frac{f_\ell(X_{\ell}[n])}{g_\ell(X_{\ell}[n])}   \right) \sum_{j=0}^{N-1} A_{j,n} + \sum_{j=0}^{N-1}B_{j,n} \right]  \nonumber \\& 
\geq    \mathbb{E}_\infty \left[ \left( \prod\limits_{\ell \, \in \, \bm{E}} \frac{f_\ell(X_{\ell}[n])}{g_\ell(X_{\ell}[n])}   \right) \sum_{j=0}^{N-1} A_{j,n} + \sum_{j=0}^{N-1}B_{j,n} \right],
\end{align}
which together with eq. \eqref{eq:eq10} implies that
\begin{align}
\label{eq:eq11b}
\Phi_{n,N-1}(\bm{S}[1,{n-1}],\bm{S}[n+1,N-1]) \geq    \mathbb{E}_\infty \left[ \left( \prod\limits_{\ell \, \in \, \bm{E}} \frac{f_\ell(X_{\ell}[n])}{g_\ell(X_{\ell}[n])}   \right) \sum_{j=0}^{N-1} A_{j,n} + \sum_{j=0}^{N-1}B_{j,n} \right].
\end{align}
By averaging both sides of eq. \eqref{eq:eq11b} with respect to $\bm{\alpha}$ we then have that
\begin{align}
\label{eq:yoeqeq}
&\Phi_{n,N-1}(\bm{S}[1,{n-1}],\bm{S}[n+1,N-1]) \nonumber =\sum\limits_{\bm{E} \,\in\, \mathcal{E}} \alpha_{\bm E}\Phi_{n,N-1}(\bm{S}[1,{n-1}],\bm{S}[n+1,N-1])
\nonumber \\&\geq   
\sum\limits_{\bm{E} \,\in\, \mathcal{E}} \bm{\alpha}_{\bm E}\mathbb{E}_\infty \left[ \left( \prod\limits_{\ell \, \in \, \bm{E}} \frac{f(X_{\ell}[n])}{g(X_{\ell}[n])}   \right) \sum_{j=0}^{N-1} A_{j,n} + \sum_{j=0}^{N-1}B_{j,n} \right]  \nonumber
\\&
=\mathbb{E}_\infty \left[ \left(\sum\limits_{\bm{E} \,\in\, \mathcal{E} }\alpha_{\bm E}\left( \prod\limits_{\ell \, \in \, \bm{E}} \frac{f_\ell(X_{\ell}[n])}{g_\ell(X_{\ell}[n])}   \right) \right)\sum_{j=1}^{N} A_{j,n} + \sum_{j=1}^{N}B_{j,n} \right] 
\nonumber \\&=
\mathbb{E}_\infty \left[\mathcal{L}_{\bm \alpha}(n,n-1) \left( \sum_{j=0}^{N-1} A_{j,n}\right) + \sum_{j=0}^{N-1}B_{j,n} \right] 
\nonumber \\& 
 = \mathbb{E}_\infty \bigg[ \sum\limits_{j=0}^{N-1}\sum\limits_{\nu=0}^{j}(1-W_{\bm \alpha}[\nu])^+
 \mathcal{L}_{\bm \alpha}(n,n-1)\left(\prod\limits_{\substack{i=\nu+1\\i\neq n}}^{j-1}
 \Gamma_{\bm{S}}(i,i-1)\right)  \mathbbm{1}_{\{\tau^{(N)} > j\}} \bigg].
\end{align}
By unfolding eq. \eqref{eq:eq7} in the same fashion with respect to all $0 \leq n < N$, it can be easily shown that

\begin{align}
\label{eq:eq14}
&\sup\limits_{\bm{S}[1,N-1]}  \mathbb{E}_\infty \left[ \sum\limits_{j=0}^{\tau^{(N)} - 1}\sum\limits_{\nu=0}^{j}(1-W_{\bm \alpha}[\nu])^+\Gamma_{\bm{S}}(j,\nu) \right] \geq  
 \mathbb{E}_\infty \left[ \sum\limits_{j=0}^{\tau^{(N)} -1}\sum\limits_{\nu=0}^{j}(1-W_{\bm \alpha}[\nu])^+\mathcal{L}_{\bm \alpha}(j,\nu)  \right],
\end{align}
which in turn together with \eqref{eq:eq5b} implies that
\begin{align}
\label{eq:eq15}
\mathrm{WADD}(\tau^{(N)}) &\geq 
 \nonumber\frac{ \mathbb{E}_\infty \left[ \sum\limits_{j=0}^{\tau^{(N)}-1} \sum\limits_{\nu=0}^{j}(1-W_{\bm \alpha}[\nu])^+ \mathcal{L}_{\bm \alpha}(j,\nu)  \right]}{\mathbb{E}_\infty  \left[\sum\limits_{\nu=0}^{\tau^{(N)}-1}(1-W_{\bm \alpha}[\nu])^+\right]}
\\&=
\frac{  \mathbb{E}_\infty \left[ \sum\limits_{j=0}^{\tau^{(N)}-1} \left(\sum\limits_{\nu=0}^{j-1}(1-W_{\bm \alpha}[\nu])^+ \mathcal{L}_{\bm \alpha}(j,\nu)+ (1-W_{\bm \alpha}[j])^+  \right) \right]}{\mathbb{E}_\infty  \left[\sum\limits_{\nu=0}^{\tau^{(N)}-1}(1-W_{\bm \alpha}[\nu])^+\right]}.
\end{align}
From Lemma 1 of \cite{moustakides:1986} we have that 
\begin{align}
\sum\limits_{\nu=0}^{j-1}(1-W_{\bm \alpha}[\nu])^+ \mathcal{L}_{\bm \alpha}(j,\nu) = W_{\bm \alpha}[j]
 \end{align}
 which together with \eqref{eq:eq15} implies that
\begin{align}
\label{eq:eq16}
&\mathrm{WADD}(\tau^{(N)})\geq  \frac{   \mathbb{E}_\infty \left[ \sum\limits_{j=0}^{\tau^{(N)}-1} \left(W_{\bm \alpha}[j]+ (1-W_{\bm \alpha}[j])^+  \right) \right]}{\mathbb{E}_\infty  \left[\sum\limits_{\nu=0}^{\tau^{(N)}-1}(1-W_{\bm \alpha}[\nu])^+\right]}
=
\frac{  \mathbb{E}_\infty \left[ \sum\limits_{j=0}^{\tau^{(N)}-1} \max\{ W_{\bm \alpha}[j],1\} \right]}{\mathbb{E}_\infty  \left[\sum\limits_{\nu=0}^{\tau^{(N)}-1}(1-W_{\bm \alpha}[\nu])^+\right]}.
\end{align}
Consider $b$ chosen such that $\mathbb{E}_\infty[\tau_W(\bm{\alpha},b)] = \gamma$. Let $b' \geq b$ such that $b ' >0$. Then, from Lemma \ref{thm:lemma_trunc} and eq. \eqref{eq:eq16} we have that
\begin{align}
\label{eq:eq17}
\mathrm{WADD}(\tau) &\geq \mathrm{WADD}(\tau^{(N)}) 
\geq
\frac{  \mathbb{E}_\infty \left[ \sum\limits_{j=0}^{\tau^{(N)}-1} \max\{ W_{\bm \alpha}[j],1\} \right]}{\mathbb{E}_\infty  \left[\sum\limits_{\nu=0}^{\tau^{(N)}-1}(1-W_{\bm \alpha}[\nu])^+\right]}
\nonumber \\&\geq 
\frac{  \mathbb{E}_\infty \left[ \sum\limits_{j=0}^{\tau^{(N)}-1}\min\{ \max\{ W_{\bm \alpha}[j],1\} ,e^{b'}\}\right]}{\mathbb{E}_\infty  \left[\sum\limits_{\nu=0}^{\tau^{(N)}-1}(1-W_{\bm \alpha}[\nu])^+\right]}.
\end{align}
Note that
\begin{align}
\bigg| \min \left\{\max\{W_{\bm \alpha}[j-1],1\},e^{b'}\right\} \bigg| \leq e^{b'}
\end{align}
and that since $W_{\bm \alpha}[j] \geq 0$
\begin{align}
|(1-W_{\bm \alpha}[j])^+| \leq 1.
\end{align}
Furthermore, since $\mathbb{E}_\infty[\tau] < \infty$ by assumption, by using Lemma \ref{converge} after taking the limit on both sides of \eqref{eq:eq17} we have that 
\begin{align}
\label{eq:eq18}
\mathrm{WADD}(\tau) 
&\geq 
\frac{  \mathbb{E}_\infty \left[ \sum\limits_{j=0}^{\tau-1}\min\{ \max\{ W_{\bm \alpha}[j],1\} ,e^{b'}\}\right]}{\mathbb{E}_\infty  \left[\sum\limits_{\nu=0}^{\tau-1}(1-W_{\bm \alpha}[\nu])^+\right]}
\end{align}
Since \eqref{eq:eq18} holds for arbitrary $\tau$ adapted to  $\mathscr{F}$, we have that for any $\gamma > 1$
\begin{align}
\label{eq:eq19}
\inf\limits_{\tau \,\in \,C_\gamma} \mathrm{WADD}
 &\geq 
 \inf\limits_{\tau \,\in \,C_\gamma} \frac{  \mathbb{E}_\infty \left[ \sum\limits_{j=0}^{\tau-1}\min\{ \max\{ W_{\bm \alpha}[j],1\} ,e^{b'}\}\right]}{\mathbb{E}_\infty  \left[\sum\limits_{\nu=0}^{\tau-1}(1-W_{\bm \alpha}[\nu])^+\right]}
  \nonumber \\& \geq 
\frac{ \inf\limits_{\tau \,\in \,C_\gamma}  \mathbb{E}_\infty \left[ \sum\limits_{j=0}^{\tau-1}\min\{ \max\{ W_{\bm \alpha}[j],1\} ,e^{b'}\}\right]}{\sup\limits_{\tau \, \in \,C_\gamma}  \mathbb{E}_\infty  \left[\sum\limits_{\nu=0}^{\tau-1}(1-W_{\bm \alpha}[\nu])^+\right]}.
\end{align}
Note that the function $\phi(x) = (1-x)^+$ in continuous and non-increasing with $\phi(0)=1$. As a result, from Theorem 1 of \cite{moustakides:1986} we have that 
\begin{align}
\label{eq:opt_1}
\mathbb{E}_\infty  \left[\sum\limits_{\nu=0}^{\tau_W(\bm{\alpha}, b)-1}(1-W_{\bm \alpha}[\nu])^+\right]
 =
  \sup\limits_{\tau \,\in\, C_\gamma}  \mathbb{E}_\infty  \left[\sum\limits_{\nu=0}^{\tau-1}(1-W_{\bm \alpha}[\nu])^+\right]
\end{align}
Furthermore, note that the function $\psi(x) = - \min \left\{\max\{x,1\},e^{b'}\right\}$ is continuous and non-increasing in $x$ with $\psi(0) = - \min \left\{1,\nu'\right\}$. As a result, from Theorem 1 of \cite{moustakides:1986} we also have that
\begin{align}
\label{eq:opt_2}
 &\inf\limits_{\tau \,\in \,C_\gamma}  \mathbb{E}_\infty \left[ \sum\limits_{j=0}^{\tau-1}\min\{ \max\{ W_{\bm \alpha}[j],1\} ,e^{b'}\}\right]=\nonumber
  -\sup\limits_{\tau\, \in \,C_\gamma}  \mathbb{E}_\infty \left[- \sum\limits_{j=0}^{\tau-1}\min\{ \max\{ W_{\bm \alpha}[j],1\} ,e^{b'}\}\right]
  \\& =
 - \mathbb{E}_\infty \left[- \sum\limits_{j=0}^{\tau_W(\bm{\alpha},b)-1}\min\{ \max\{ W_{\bm \alpha}[j],1\} ,e^{b'}\}\right]
  =
  \mathbb{E}_\infty \left[\sum\limits_{j=0}^{\tau_W(\bm{\alpha},b)-1}\min\{ \max\{ W_{\bm \alpha}[j],1\} ,e^{b'}\}\right].
 \end{align}
 Then, from \eqref{eq:eq19} - \eqref{eq:opt_2} we have that
\begin{align}
\label{eq:eq20}
\inf\limits_{\tau \in C_\gamma}\mathrm{WADD}(\tau) 
&\geq  
\frac{\mathbb{E}_\infty \left[\sum\limits_{j=0}^{\tau_W(\bm{\alpha},b)-1}\min\{ \max\{ W_{\bm \alpha}[j],1\} ,e^{b'}\}\right]} {\mathbb{E}_\infty  \left[\sum\limits_{\nu=0}^{\tau_W(\bm{\alpha},b)-1}(1-W_{\bm \alpha}[\nu])^+\right]} 
\nonumber \\&\stackrel{(f)}{=} 
\frac{\mathbb{E}_\infty \left[\sum\limits_{j=0}^{\tau_W(\bm{\alpha},b)-1} \max\{ W_{\bm \alpha}[j],1\} \right]} {\mathbb{E}_\infty  \left[\sum\limits_{\nu=0}^{\tau_W(\bm{\alpha},b)-1}(1-W_{\bm \alpha}[\nu])^+\right]} ,
\end{align}
where $(f)$ is implied since $W_{\bm \alpha}[j] < e^{b} \leq e^{b'}$ for $0 \leq j <\tau_W(\bm{\alpha},b)$ and since $b' >0$. 
Furthermore, note that from the optimality of the CUSUM test for the classic QCD problem \cite{moustakides:1986} we have that
\begin{align}
\label{eq:eq20b}
\frac{\mathbb{E}_\infty \left[\sum\limits_{j=0}^{\tau_W(\bm{\alpha},b)-1} \max\{ W_{\bm \alpha}[j],1\} \right]} {\mathbb{E}_\infty  \left[\sum\limits_{\nu=0}^{\tau_W(\bm{\alpha},b)-1}(1-W_{\bm \alpha}[\nu])^+\right]} 
= 
\overline{\mathrm{WADD}}(\tau_W(\bm{\alpha},b)).
\end{align}
As a result, from \eqref{eq:eq20} and \eqref{eq:eq20b} and since 
\begin{align}
\mathrm{WADD}(\tau_W(\bm{\alpha},b)) \geq \inf\limits_{\tau \in C_\gamma}\mathrm{WADD}(\tau)
\end{align}
the theorem is established.
\end{proof}
\begin{proof}[Proof of Lemma 1]
Fix $\bm{\alpha} \in \mathcal{A}$, $b>0$ and $N>0$. For purposes of presentation of this proof, we denote the stopping $\tau_W(\bm{\lambda}_U,b)$ with uniform weights and threshold $b$ by simply $\tau_W$ and $W_{\bm{\lambda}_U}[k]$, $\mathcal{L}_{\bm{\alpha}}(\cdot,\cdot)$ by $W[k]$ and $\mathcal{L}(\cdot,\cdot)$ respectively. Define the truncated stopping time  $\tau_{W}^{(N)} = \min\{\tau_W, N\}$. Note that by employing a change of measure similar to the one in \eqref{eq:eq1} we have that for any $\nu \geq 0$ and any $\bm{S}$
\begin{align}
\label{eq:eq22}
 V_\nu & \triangleq \mathbb{E}_\nu^{\bm{S}} \left[\tau_{W}^{(N)}-\nu \Big|\tau_{W}^{(N)}> \nu, \mathscr{F}_{\nu} \right] 
 =
 \mathbb{E}_\nu^{\bm{S}} \left[\sum_{j=\nu}^{\infty}\mathbbm{1}_{\left\{\tau_{W}^{(N)} > j\right\}} \bigg|\tau_{W}^{(N)}> \nu, \mathscr{F}_{\nu}\right]
 \nonumber \\&
 =
 \mathbb{E}_\nu^{\bm{S}} \left[\sum_{j=\nu}^{N-1}\mathbbm{1}_{\left\{\tau_{W}^{(N)}> j\right\}}  \bigg|\tau_{W}^{(N)}>\nu, \mathscr{F}_{\nu} \right]\nonumber
 =\mathbb{E}_\infty \left[\sum_{j=\nu}^{N-1}\Gamma_{\bm{S}}(j,\nu)\mathbbm{1}_{\left\{\tau_{W}^{(N)}>j\right\}} \bigg|\tau_{W}^{(N)}> \nu, \mathscr{F}_{\nu} \right]
 \\&=
\nonumber  1+ \mathbb{E}_\infty \left[\sum_{j=\nu+1}^{N-1}\Gamma_{\bm{S}}(j,\nu)\mathbbm{1}_{\left\{\tau_{W}^{(N)}> j\right\}} \bigg|\tau_{W}^{(N)}>\nu, \mathscr{F}_{\nu} \right]
   \\& \stackrel{(a)}{=}
 1+ \mathbb{E}_\infty \left[\sum_{j=\nu+1}^{N-1}\Gamma_{\bm{S}}(j,\nu)\left(\prod_{i=\nu+1}^{j}\mathbbm{1}_{\{W[i]<e^b\}}\right) \bigg|\tau_{W}^{(N)}> \nu, \mathscr{F}_{\nu} \right] ,
\end{align}
where $(a)$ follows since for $\nu< j < N$ we have that conditioned on $\left\{\tau_{W}^{(N)} > \nu\right\}$ 
\begin{align}
\left\{\tau_{W}^{(N)}> j \right\}=\bigcap_{i=\nu+1}^j\{W[i]<e^b\}. 
\end{align}
To proceed, we establish that for any $0 \leq \nu \leq N-1$ 
\begin{align}
\label{eq:eq23}
V_\nu = 1+\mathbb{E}_\infty\left[ \Gamma_{\bm{S}}(\nu+1,\nu)\mathbbm{1}_{\{W[\nu+1]<e^b\}}V_{\nu+1} \bigg| \tau_{W}^{(N)}>\nu, \mathscr{F}_{\nu} \right],
\end{align}
with $V_{\nu}=1$ for all $\nu\geq N-1$. First of all, from the definition of $V_\nu$ we have that
\begin{align}
V_{N-1}=\mathbb{E}_{N-1}^{\bm{S}} \left[\tau_{W}^{(N)}-N +1\Big|\tau_{W}^{(N)} > N-1, \mathscr{F}_{N-1} \right] =
\mathbb{E}_{N-1}^{\bm{S}} \left[N-N+1 \Big|\tau_{W}^{(N)}> N, \mathscr{F}_{N-1} \right] 
=1.
\end{align}
In addition, for $\nu \geq N$ the event $\{\tau_{W}^{(N)}> \nu\}$ cannot occur, hence, by convention we have that in this case $V_\nu =1$.
Furthermore, note that $\Gamma_{\bm{S}}(\nu+1,\nu)\mathbbm{1}_{\{W[\nu+1]<e^b\}}$ is present in all terms of the summation in \eqref{eq:eq22}, hence
{\small
\begin{align}
\label{eq:eq25}
V_\nu 
&=  
1+ \nonumber \mathbb{E}_\infty \left[\Gamma_{\bm{S}}(\nu+1,\nu)\mathbbm{1}_{\{W[\nu+1]<e^b\}}\sum_{j=\nu+1}^{N-1}\Gamma_{\bm{S}}(j,\nu+1)\left(\prod_{i=\nu+2}^{j}\mathbbm{1}_{\{W[i]<e^b\}}\right) \bigg|\tau_{W}^{(N)}> \nu, \mathscr{F}_{\nu} \right]
\\&
=
1+ \nonumber \mathbb{E}_\infty \bigg[\Gamma_{\bm{S}}(\nu+1,\nu)\mathbbm{1}_{\{W[\nu+1]<e^b\}}\bigg(1
+
\sum_{j=\nu+2}^{N-1}\Gamma_{\bm{S}}(j,\nu+1)\left(\prod_{i=\nu+2}^{j}\mathbbm{1}_{\{W[i]<e^b\}}\right) \bigg)
\nonumber\\&
\bigg|\tau_{W}^{(N)}> \nu, \mathscr{F}_{\nu} \bigg] \nonumber
\\&  
\stackrel{(b)}{=}
1+ \nonumber\mathbb{E}_\infty\bigg[ \mathbb{E}_\infty \bigg[\Gamma_{\bm{S}}(\nu+1,\nu)\mathbbm{1}_{\{W[\nu+1]<e^b\}}\bigg(1+\sum_{j=\nu+2}^{N-1}\Gamma_{\bm{S}}(j,\nu+1)\bigg(\prod_{i=\nu+2}^{j}\mathbbm{1}_{\{W[i]<e^b\}}\bigg) \bigg)
\nonumber\\&
\bigg|\tau_{W}^{(N)}> \nu+1, \mathscr{F}_{\nu+1}\bigg] \bigg| \tau_{W}^{(N)}>\nu, \mathscr{F}_{\nu} \bigg] \nonumber
\\&\stackrel{(c)}{=} 
1+ \nonumber\mathbb{E}_\infty\bigg[ \Gamma_{\bm{S}}(\nu+1,\nu)\mathbbm{1}_{\{W[\nu+1]<e^b\}}\bigg(1+\mathbb{E}_\infty \bigg[\sum_{j=\nu+2}^{N-1}\Gamma_{\bm{S}}(j,\nu+1)\bigg(\prod_{i=\nu+2}^{j}\mathbbm{1}_{\{W[i]<e^b\}}\bigg)
\nonumber\\&
 \bigg|\tau_{W}^{(N)}>\nu+1, \mathscr{F}_{\nu+1}\bigg] \bigg)\bigg| \tau_{W}^{(N)}> \nu, \mathscr{F}_{\nu} \bigg]\nonumber 
\\& \stackrel{(d)}{=} 
1+\mathbb{E}_\infty\left[ \Gamma_{\bm{S}}(\nu+1,\nu)\mathbbm{1}_{\{W[\nu+1]<b\}}V_{\nu+1} \bigg| \tau_{W}^{(N)} > \nu, \mathscr{F}_{\nu} \right],
\end{align}
}where $(b)$ follows from the tower property of expectations, $(c)$ follows since $ \Gamma_{\bm{S}}(\nu+1,\nu)\mathbbm{1}_{\{W[\nu+1]<b\}}$ is $\mathscr{F}_{\nu+1}$-measurable, and hence can go out of the conditional expectation, and $(d)$ follows from \eqref{eq:eq22}. 

We will now establish that $V_\nu$ is independent of $\bm{S}$ for all $ \nu \geq 0$ and that it is a function of $\mathscr{F}_{\nu}$ only through $W[\nu]$. First of all, note that for $\nu\geq N -1$, $V_\nu=1$ so we only have to investigate the case that $\nu \leq N-2$. For $\nu \leq N-2$ since $\tau_{W}^{(N)}$ is truncated by $N$ and since  $\bm{X}[1],\dots,\bm{X}[\nu]$ are independent from $\bm{S}$ we have to show that $V_\nu$ is independent of $\bm{S}[\nu+1,N]$ and that $V_\nu$ is a function of $\mathscr{F}_\nu$ only through $W[\nu]$. For $2 \leq k \leq N-2$, assume that the statement holds for $V_{N-k}(W[N-k])$. From \eqref{eq:eq23} we have that 
{\small
\begin{align}
\label{eq:eq26}
&V_{N-(k+1)} 
= 
1+\mathbb{E}_\infty\left[ \Gamma_{\bm{S}}(N-k,N-k-1)\mathbbm{1}_{\{W[N-k]<e^b\}}V_{N-k}(W[N-k]) \bigg| \tau_{W}^{(N)} > N-k-1, \mathscr{F}_{N-k-1} \right]
\nonumber\\& 
\stackrel{(e)}{=}  
1+\mathbb{E}_\infty\bigg[\Gamma_{\bm{S}}(N-k,N-k-1)\mathbbm{1}_{\{\max\{W[N-k-1],1\}\mathcal{L}\left(N-k,N-k-1\right)<e^b\}} \nonumber \\&V_{N-k}(\max\{W[N-k-1],1\}\mathcal{L}(N-k,N-1-k))  \Big|\tau_{W}^{(N)}\geq N-k-1, \mathscr{F}_{N-k-1} \bigg] 
\nonumber\\& 
\stackrel{(f)}{=} 
1+\mathbb{E}_\infty\bigg[\left(\prod\limits_{\ell\,\in\, \bm{S}[N-k]}\frac{f(X_\ell[N-k])}{g(X_\ell[N-k])}\right)  \mathbbm{1}_{\bigg\{\max\{W[N-k-1],1\}\left(\prod\limits_{\ell\,\in\, \bm{S}[N-k]}\frac{f(X_\ell[N-k])}{g(X_\ell[N-k])}\right) <e^b\bigg\}} \nonumber \\&V_{N-k}\left(\max\{W[N-k-1],1\}\left(\prod\limits_{\ell\,\in\, \bm{S}[N-k]}\frac{f(X_\ell[N-k])}{g(X_\ell[N-k])}\right)\right)  \Bigg|\tau_{W}^{(N)}> N-k-1, \mathscr{F}_{N-k-1} \bigg] 
 ,
\end{align}
}
where (e) follows from eq. \eqref{eq:test_rec} and (f) follows from \eqref{eq:like_ratio}.  Note that under $\mathbb{P}_\infty(\cdot)$, the distribution of the likelihood ratio in \eqref{eq:eq26} is independent of $\bm{S}[N-k]$. As a result, we have that for all $\bm{E} \in \mathcal{E}$
\begin{align}
\label{eq:eqimp}
&V_{N-(k+1)} 
= 
1+\mathbb{E}_\infty\bigg[\left(\prod\limits_{\ell\,\in\, \bm{E}}\frac{f(X_\ell[N-k])}{g(X_\ell[N-k])}\right)  \mathbbm{1}_{\bigg\{\max\{W[N-k-1],1\}\left(\prod\limits_{\ell\,\in\, \bm{E}}\frac{f(X_\ell[N-k])}{g(X_\ell[N-k])}\right) <e^b\bigg\}} \nonumber \\&V_{N-k}\left(\max\{W[N-k-1],1\}\left(\prod\limits_{\ell\,\in\, \bm{E}}\frac{f(X_\ell[N-k])}{g(X_\ell[N-k])}\right)\right)  \Bigg|\tau_{W}^{(N)}> N-k-1, \mathscr{F}_{N-k-1} \bigg] 
 .
\end{align}
From \eqref{eq:eqimp} we can then easily see that $V_{N-(k+1)}$ is independent of $\bm{S}$. Furthermore, since the likelihood ratio in the last line is independent of $\mathscr{F}_{N-k-1}$ we have that $V_{N-(k+1)}$ is a function of $\mathscr{F}_{N-k-1}$ only through $W_{N-k-1}$. As a result, by induction we have that for all $\nu \geq 0$, $V_\nu$ is independent of $\bm{S}$ and depends on $\mathscr{F}_{\nu}$ only through $W[\nu]$. 

Following, note that for $\nu \geq 0$, from the independence of $V_{\nu}$ from $\bm{S}$ and eq. \eqref{eq:eq23} we have that for all $\bm{E} \in \mathcal{E}$
\begin{align}
\label{eq:eksiswsh2}
V_\nu = 1+\mathbb{E}_\infty\left[ \left(\prod\limits_{\ell\,\in\, \bm{E}}\frac{f(X_\ell[\nu+1])}{g(X_\ell[\nu+1])}\right) \mathbbm{1}_{\{W[\nu+1]<e^b\}}V_{\nu+1} \bigg| \tau_{W}^{(N)}>\nu, \mathscr{F}_{\nu} \right].
\end{align}
As a result, by averaging over $\bm{E}$ with respect to $\bm{\alpha}$ we have that
\begin{align}
\label{eq:eksiswsh3}
V_{\nu} \nonumber
\nonumber&= 1+ \sum\limits_{\bm{E}\,\in\,\mathcal{E}}\alpha_{\bm{E}}\mathbb{E}_\infty\left[ \left(\prod\limits_{\ell\,\in\, \bm{E}}\frac{f(X_\ell[\nu+1])}{g(X_\ell[\nu+1])}\right) \mathbbm{1}_{\{W[\nu+1]<e^b\}}V_{\nu+1} \bigg| \tau_{W}^{(N)} > \nu, \mathscr{F}_{\nu} \right] 
\nonumber
\\&= 
1+\mathbb{E}_\infty\left[ \left( \sum\limits_{\bm{E}\,\in\,\mathcal{E}}\alpha_{\bm{E}}\prod\limits_{\ell\,\in\, \bm{E}}\frac{f(X_\ell[\nu+1])}{g(X_\ell[\nu+1])}\right) \mathbbm{1}_{\{W[\nu+1]<e^b\}}V_{\nu+1} \bigg| \tau_{W}^{(N)}> \nu, \mathscr{F}_{\nu} \right] 
\nonumber
\\& 
=
1+\mathbb{E}_\infty\left[ \mathcal{L}(\nu+1,\nu) \mathbbm{1}_{\{W[\nu+1]<e^b\}}V_{\nu+1} \bigg|  \tau_{W}^{(N)}> \nu, \mathscr{F}_{\nu}\right] .
\end{align}
By unfolding the recursion in \eqref{eq:eksiswsh3}, it can be easily seen that for any $\nu \geq 0$ and any  $\bm{S}$
\begin{align}
\label{eq:eqabc}
\mathbb{E}_\nu^{\bm{S}} \left[\tau_{W}^{(N)}-\nu|\tau_{W}^{(N)}>\nu , \mathscr{F}_{\nu} \right] 
= 
\overline{\mathbb{E}}_\nu^{\bm{\alpha}} \left[\tau_{W}^{(N)}-\nu|\tau_{W}^{(N)}> \nu , \mathscr{F}_{\nu} \right].
\end{align}
From the Monotone Convergence Theorem, since $\tau_{W}^{(N)}-\nu$ and $\mathbbm{1}_{\left\{\tau_{W}^{(N)}-\nu\right\}}$ are non-decreasing with $N$, we have that for all $\bm{S}$
\begin{align}
\label{eq:eq28}
 &\lim_{N \rightarrow \infty}\mathbb{E}_\nu^{\bm{S}} \left[\tau_{W}^{(N)}-\nu \Big|\tau_{W}^{(N)} >\nu , \mathscr{F}_{\nu} \right]
 = 
 \lim_{N \rightarrow \infty}\frac{\mathbb{E}_\nu^{\bm{S}} \left[(\tau_{W}^{(N)}-\nu)\mathbbm{1}_{\{\tau_{W}^{(N)}> \nu\}} \Big| \mathscr{F}_{\nu} \right]}{\mathbb{E}_\nu^{\bm{S}} \left[\mathbbm{1}_{\left\{\tau_{W}^{(N)}> \nu\right\}}\Big| \mathscr{F}_{\nu} \right] }
 \nonumber\\&= 
\frac{\mathbb{E}_\nu^{\bm{S}} \left[\lim_{N \rightarrow \infty}(\tau_{W}^{(N)}-\nu)\mathbbm{1}_{\{\tau_{W}^{(N)}>\nu\}} \Big| \mathscr{F}_{\nu} \right]}{\mathbb{E}_\nu^{\bm{S}} \left[\lim_{N \rightarrow \infty}\mathbbm{1}_{\{\tau_{W}^{(N)} > \nu\}}\Big| \mathscr{F}_{\nu} \right] }  \nonumber
 =
\frac{\mathbb{E}_\nu^{\bm{S}} \left[(\tau_{W}-\nu)\mathbbm{1}_{\{\tau_{W}> \nu\}} |\mathscr{F}_{\nu} \right]}{\mathbb{E}_\nu^{\bm{S}} \left[\mathbbm{1}_{\{\tau_{W}> \nu\}}| \mathscr{F}_{\nu} \right] } 
\\&=
\mathbb{E}_\nu^{\bm{S}} \left[\tau_{W}-\nu|\tau_{W}> \nu , \mathscr{F}_{\nu} \right].
\end{align}
Similarly, it can be shown that
\begin{align}
\label{eq:eq299}
  \lim_{N \rightarrow \infty}\overline{\mathbb{E}}_\nu^{\bm{\alpha}} \left[\tau_{W}^{(N)}-\nu \Big|\tau_{W}^{(N)} > \nu , \mathscr{F}_{\nu} \right]
  =
\overline{\mathbb{E}}_\nu^{\bm{\alpha}}  \left[\tau_{W}-\nu|\tau_{W} > \nu , \mathscr{F}_{\nu} \right].
\end{align}
As a result, by taking the limit on both sides of \eqref{eq:eqabc} and using eqs. \eqref{eq:eq28} and \eqref{eq:eq299} we have that for all $\nu \geq 0$, $\bm{S}$
\begin{align}
\overline{\mathbb{E}}_\nu \left[\tau_{W}-\nu|\tau_{W}> \nu , \mathscr{F}_{\nu} \right]
=  
\mathbb{E}_\nu^{\bm{S}} \left[\tau_{W}-\nu|\tau_{W}> \nu , \mathscr{F}_{\nu} \right]
\end{align}
which in turn implies 
\begin{align}
\mathrm{WADD}(\tau_W) = \overline{\mathrm{WADD}}_{\bm{\alpha}}(\tau_W) .
\end{align}
\end{proof}
\begin{proof}[Proof of Lemma 2]
Define $\bm{\beta} = \left[\beta_{\bm{E}_1},\dots,\beta_{\bm{E}_{|\mathcal{E}|-1}}\right]^\top$ where $\alpha_{\bm{E}_j} \triangleq \beta_{\bm{E}_j}$ for $j \in  \left[|\mathcal{E}|-1\right]$. The constrained optimization of $I_{\bm{\alpha}}$ can then be equivalently replaced by
\begin{equation}
\label{eq:minim_1}
\begin{aligned}
& \underset{\bm{\beta}}{\text{inf}}
& &q(\bm{\beta})\\
& \text{s.t.}
& & \beta_{\bm{E}_j} \geq 0,\,\, \forall\,\,  j \in \left[ |\mathcal{E}| -1 \right]\\
&
&&\sum_{j=1}^{|\mathcal{E}| -1} \beta_{\bm{E}_j} \leq 1,
\end{aligned}
\end{equation}
where
\begin{align}
 &q(\bm{\beta})\hspace{-0.05cm}\triangleq \hspace{-0.1cm} \int\limits_{\mathbb{R}^L} \left( \left(1-\|\bm{\beta}\|_1\right) p_{\bm{E}_{|\mathcal{E}|} }(\bm{x}) + \sum\limits_{j =1 }^{|\mathcal{E}| -1}\beta_{\bm{E}_j} p_{\bm{E}_j}(\bm{x})\right)
\nonumber 
\\&
 \log\hspace{-0.1cm} \left(\frac{\left( \left(1-\|\bm{\beta}\|_1\right) p_{\bm{E}_{|\mathcal{E}|} }(\bm{x}) + \sum\limits\limits_{j =1 }^{|\mathcal{E}|-1}\beta_{\bm{E}_j} p_{\bm{E}_j}(\bm{x})\right)}{g(\bm{x})}\right) .
\end{align}
Denote by $\bm{\beta}^*$ the solution to \eqref{eq:minim_1}. Then, the derivative at $\bm{\beta}^*$ is given by
\begin{align}
\label{eq:eksiswsoyla}
\frac{\partial q(\bm{\beta})}{\partial \beta_{\bm{E}_i}}\bigg|_{\bm{\beta}^*}& = \mathbb{E}_{p_{\bm{E}_i}}\left[\log\left(\frac{\overline{p}_{\bm{\alpha}^*}(\bm{X})}{g(\bm{X})} \right)\right] - \mathbb{E}_{p_{\bm{E}_{|\mathcal{E}|}}}\left[\log\left(\frac{\overline{p}_{\bm{\alpha}^*}(\bm{X})}{g(\bm{X})} \right)\right] .
\end{align}
WLOG we have that that either $\bm{\beta}^* = [\beta^*_{\bm{E}_1},\ldots, \beta^*_{\bm{E}_\eta},\dots,0]^\top$ with $\eta \in [|\mathcal{E}| - 1]$ and $\beta^*_{\bm{E}_j}>0$ for all $j \in [\eta]$ (boundary or interior point), or $\bm{\beta}^* = [0,\ldots,0]^\top$ (corner point). 

Assume that $\bm{\beta}^*$ is a corner point. Denote by $D(f \| g)$ denote the KL-divergence between two pdfs $f(\cdot)$ and $g(\cdot)$. In this case we have that for all $i \in [|\mathcal{E}|-1]$
\begin{align}
&\frac{\partial q(\bm{\beta})}{\partial \beta_{\bm{E}_i}}\bigg|_{\bm{\beta}^*} 
=\sum\limits_{\ell\,\in \,\bm{E}_{|\mathcal{E}|}} \left( D(f_\ell \| g_\ell) \mathbbm{1}_{\ell \,\in \, \bm{E}_i} -   D(g_\ell \| f_\ell) \mathbbm{1}_{\ell \, \notin \, \bm{E}_i} \right)
- \sum\limits_{\ell \,\in \, \bm{E}_{|\mathcal{E}|}}  D(f_i \| g_i)<0,
\end{align}
which is a contradiction since 
\begin{align}
\frac{\partial q(\bm{\beta})}{\partial \beta_{\bm{E}_i}}\bigg|_{\bm{\beta}^*}\geq 0
\end{align} 
must hold for all $i \in [|\mathcal{E}|-1]$ due to the fact that $\bm{\beta}^*$ is a minimum. 

As a result, $\bm{\beta}^*$ is not a corner point. In this case, for all $i \in [\eta]$ we have that
\begin{align}
\frac{\partial q(\bm{\beta})}{\partial \beta_{\bm{E}_i}}\bigg|_{\bm{\beta}^*} =0,
\end{align}
which implies that for all $i \in [\eta]$
\begin{align}
\label{eq:eksiswsoula_2}
 \mathbb{E}_{p_{\bm{E}_i}}\left[\log\left(\frac{\overline{p}_{\bm{\alpha}^*}(\bm{X})}{g(\bm{X})} \right)\right]= \mathbb{E}_{p_{\bm{E}_{|\mathcal{E}|}}}\left[\log\left(\frac{\overline{p}_{\bm{\alpha}^*}(\bm{X})}{g(\bm{X})} \right)\right]  \triangleq J.
\end{align}
Furthermore, we have that since $\alpha_{\bm{E}_j}^* =0 $ for $ \eta < j <|\mathcal{E}|$
\begin{align}
\label{eq:eksiswsoula_3}
J &= \left(\sum\limits_{j = 1}^{\eta}  \beta_{\bm{E}_j}^* + \left(1 - \sum\limits_{j = 1}^{\eta}\beta_{\bm{E}_j}^*\right)  \right)J \nonumber
=\left(\sum\limits_{j = 1}^{\eta} \alpha_{\bm{E}_j}^*  +\alpha_{\bm{E}_{|\mathcal{E}|}}^*\right)J
 \\&
= \sum\limits_{j=1}^{|\mathcal{E}|} \alpha_{\bm{E}_j}^* \mathbb{E}_{p_{\bm{E}_{j}}}\left[\log\left(\frac{\overline{p}_{\bm{\alpha}^*}(\bm{X})}{g(\bm{X})} \right)\right]=
 \mathbb{E}_{\overline{p}_{\bm{\alpha}^*}}\left[ \log\left(\frac{\overline{p}_{\bm{\alpha}^*}(\bm{X})}{g(\bm{X})} \right)\right] = I_{\bm{\alpha}^*}>0.
\end{align}
In addition, we have that for $\eta < i < |\mathcal{E}|$
\begin{align}
\frac{\partial q(\bm{\beta})}{\partial \beta_{\bm{E}_i}}\bigg|_{\bm{\beta}^*} > 0.
\end{align}
This implies that for all $i \in [\eta] \cup \{|\mathcal{E}|\}$ and $\eta < j < |\mathcal{E}|$
\begin{align}
 \mathbb{E}_{p_{\bm{E}_j}}\left[\log\left(\frac{\overline{p}_{\bm{\alpha}^*}(\bm{X})}{g(\bm{X})} \right)\right]>  \mathbb{E}_{p_{\bm{E}_i}}\left[\log\left(\frac{\overline{p}_{\bm{\alpha}^*}(\bm{X})}{g(\bm{X})} \right)\right]  = I_{\bm{\alpha}^*}.
\end{align}
ii) For the case of $m=1$, WLOG assume that for all $1 \leq j \leq |\mathcal{E}| = L$, we have that $E_j = j$. For $ \eta < i < L$, we then have that 
\begin{align}
\label{eq:opt_eq_3}
& \mathbb{E}_{p_{E_i}}\left[\log\left(\frac{\overline{p}_{\bm{\alpha}^*}(\bm{X})}{g(\bm{X})} \right)\right] =  \mathbb{E}_{p_{i}}\left[\log\left(\frac{\overline{p}_{\bm{\alpha}^*}(\bm{X})}{g(\bm{X})} \right)\right] = 
 \mathbb{E}_{p_{i}}\left[\log\left(\sum\limits_{j=1}^{\eta}\alpha_j^*\frac{f_{j}(X_j)}{g_j(X_j)} +\alpha_L^*\frac{f_{L}(X_L)}{g_L(X_L)} \right)  \right]   
 \nonumber \\&
 = \mathbb{E}_{g}\left[\log\left(\sum\limits_{j=1}^{\eta}\alpha_j^*\frac{f_{j}(X_j)}{g_j(X_j)} +\alpha_L^*\frac{f_{L}(X_L)}{g_L(X_L)} \right)  \right]   =  \mathbb{E}_{g}\left[\log\left(\frac{\overline{p}_{\bm{\alpha}^*}(\bm{X})}{g(\bm{X})} \right)\right]<0.
\end{align}
We then have that from eqs. \eqref{eq:eksiswsoyla}, \eqref{eq:eksiswsoula_2}, \eqref{eq:eksiswsoula_3} and \eqref{eq:opt_eq_3}
\begin{align}
\label{eq:eqeqeqeqeq}
\frac{\partial q(\bm{\beta})}{\partial \beta_{E_i}}\bigg|_{\bm{\beta}^*} <0
\end{align} 
for all $ \eta < i < L$, which leads to a contradiction, since \eqref{eq:eqeqeqeqeq} cannot hold at the minimum.
\end{proof}
\begin{proof}[Proof of Theorem 4]
Our upper bound analysis is based on the proof technique in \cite{lai-ieeetit-1998}. Due to the structure of the test we have that for any $b>0$
\begin{align}
\mathrm{WADD}(\tau_W(\bm{\alpha}^*, b)) = \sup_{\bm S} \mathbb{E}_0^{\bm S}[\tau_W(\bm{\alpha}^*, b)].
\end{align}
Let $0< \epsilon<I_{\bm{\alpha}^*}$ and $n_b = \frac{b}{I_{\bm{\alpha}^*}-\epsilon}$. We then have that
\begin{align}
\sup\limits_{\bm S}\mathbb{E}_0^{\bm S}\left[\frac{\tau_W(\bm{\alpha}^*, b)}{n_b}\right]& \stackrel{(a)}{=} \sup\limits_{\bm S}\int\limits_{0}^\infty\mathbb{P}_0^{\bm S}\left(\frac{\tau_W(\bm{\alpha}^*, b)}{n_b} >x \right) dx \nonumber
  \stackrel{(b)}{\leq} \sup\limits_{\bm S}\sum_{\zeta = 0}^\infty \mathbb{P}_0^{\bm S} (\tau_W(\bm{\alpha}^*, b) > \zeta n_b)
\nonumber \\&=  1+  \sup\limits_{\bm S}\sum_{\zeta = 1}^\infty \mathbb{P}_0^{\bm S} (\tau_W(\bm{\alpha}^*, b) > \zeta n_b)
,
\label{eq:initial_eq}
\end{align}
where (a) follows from writing the expectation as an integral of the inverse cumulative density function for a positive random variable and (b) from the sum-integral inequality. 

Define the log-likelihood ratio at time $j$ corresponding to \eqref{eq:eq:random_stat_model} for $\bm{\alpha} = \bm{\alpha}^*$ by
\begin{align}
Z_{{\bm{\alpha}}^*}[j] \triangleq \log\frac{\overline{p}_{\bm{\alpha}^*}(\bm{X}[j])}{g(\bm{X}[j])}.
\end{align}
For any path $\bm S= \{\bm S[k]\}_{k=1}^\infty$, $\zeta \geq 1$, we then have that
\begin{align}
&\mathbb{P}_0^{\bm S} (\tau_W(\bm{\alpha}^*, b) > \zeta n_b) = \nonumber\mathbb{P}_0^{ \bm S} \left(\max\limits_{1\leq k \leq \zeta n_b}W_{{\bm \alpha}^*}[k] <e^b\right) 
\stackrel{(c)}{=} 
\mathbb{P}_0^{ \bm S} \left(\max\limits_{1\leq k \leq \zeta n_b}\max\limits_{1\leq i \leq k} \mathcal{L}_{{\bm{\alpha}}^*}(k,i-1) <e^b\right) 
\nonumber\\&
\stackrel{(d)}{=}  \mathbb{P}_0^{\bm S} \left(\max_{1 \leq k \leq \zeta n_b}   \max\limits_{1\leq i \leq k }\sum\limits_{j=i}^k Z_{\bm{\alpha}^*}[j]  < b  \right) \nonumber 
\stackrel{(e)}{\leq}  \mathbb{P}_0^{\bm S} \left( \max\limits_{1\leq i \leq r n_b} \sum\limits_{j=i}^{r n_b}Z_{\bm{\alpha}^*}[j] < b ,\, \forall \, r \in [\zeta]  \right) \nonumber
\\& \stackrel{(f)}{\leq}  \mathbb{P}_0^{\bm S} \left( \sum\limits_{j=(r-1)n_b +1}^{r n_b}Z_{\bm{\alpha}^*}[j]    < b ,\, \forall \, r \in [\zeta]  \right) 
\stackrel{(g)}{=} \mathbb{P}_0^{\bm S} \left( \frac{\sum\limits_{j=(r-1)n_b +1}^{r n_b} Z_{\bm{\alpha}^*}[j] }{n_b}   < I_{\bm{\alpha}^*} -\epsilon,\, \forall \, r \in [\zeta]  \right) 
 \nonumber 
 \\& \stackrel{(h)}{=}  \prod_{r=1}^\zeta  \mathbb{P}_0^{\bm S} \left( \frac{\sum\limits_{j=(r-1)n_b +1}^{r n_b}Z_{\bm{\alpha}^*}[j]}{n_b}  < I_{\bm{\alpha}^*} -\epsilon \right) ,
 \label{eq:initial_eq_eq}
\end{align} 
where (c) follows from the definition of the M-CUSUM statistic (eq. \eqref{eq:test_stat}), (d) follows by taking the exponent at both sides of the inequality, (e) and (f) by using the binning technique in \cite{lai-ieeetit-1998}, (g) by diving both sides by $n_b$ and (h) by the independence of the observations over time. Note that for $b>0$ we then have that from eqs. \eqref{eq:initial_eq} and \eqref{eq:initial_eq_eq}
\begin{align}
&\sup_{\bm S } \sum_{\zeta = 1}^\infty \mathbb{P}_0^{\bm S} (\tau_W(\bm{\alpha}^*, b) > \zeta n_b)\nonumber 
= \sup_{\bm S} \lim_{\xi \rightarrow \infty}\sum_{\zeta = 1}^{\xi}  \mathbb{P}_0^{\bm S} (\tau_W(\bm{\alpha}^*, b) > \zeta n_b)  \nonumber
 \\&\leq  \lim_{\xi \rightarrow \infty}\sup_{\bm S} \sum_{\zeta = 1}^{\xi}  \mathbb{P}_0^{\bm S} (\tau_W(\bm{\alpha}^*, b) > \zeta n_b) \nonumber 
\leq  \lim_{\xi \rightarrow \infty}\sum_{\zeta = 1}^{\xi}  \sup_{\bm S }\mathbb{P}_0^{\bm S} (\tau_W(\bm{\alpha}^*, b)> \zeta n_b) \nonumber
 \\& \leq \lim_{\xi \rightarrow \infty}\sum_{\zeta = 1}^\xi \sup_{\bm S } \left[\prod_{r=1}^\zeta  \mathbb{P}_0^{\bm S} \left( \frac{\sum\limits_{j=(r-1)n_b +1}^{r n_b}Z_{\bm{\alpha}^*}[j]  }{n_b}  < I_{\bm{\alpha}^*} -\epsilon \right) \right] \nonumber 
 \\& 
 \leq  \lim_{\xi \rightarrow \infty}\sum_{\zeta = 1}^\xi  \prod_{r=1}^\zeta \left[\sup_{\bm S }   \mathbb{P}_0^{\bm S} \left( \frac{\sum\limits_{j=(r-1)n_b +1}^{r n_b}Z_{\bm{\alpha}^*}[j] }{n_b}  < I_{\bm{\alpha}^*} -\epsilon \right) \right] \nonumber 
 \\& 
 =     \lim_{\xi \rightarrow \infty}\sum_{\zeta = 1}^\xi  \left[\sup_{\bm S }   \mathbb{P}_0^{\bm S} \left( \frac{\sum\limits_{j= 1}^{ n_b}Z_{\bm{\alpha}^*}[j]  }{n_b}  < I_{\bm{\alpha}^*} -\epsilon \right) \right]^\zeta.\label{eq:ineq_1}
\end{align}
For fixed $\bm{S}$, $b$ define
\begin{align}
I_{\bm{S},b} \triangleq \mathbb{E}_0^{\bm{S}} \left[ \frac{\sum\limits_{j= 1}^{ n_b}Z_{\bm{\alpha}^*}[j]  }{n_b} \right] =  \frac{\sum\limits_{j= 1}^{ n_b} \mathbb{E}_{p_{\bm{S}[j]}} \left[Z_{\bm{\alpha}^*}[j] \right] }{n_b}  \geq I_{\bm{\alpha}}^*,
\end{align}
where the inequality follows from Lemma \ref{equal_drifts_lemma}. This in turn implies that for any $\bm{S}$ we have that
\begin{align}
\label{eq:mia_akoma_equ}
 & \mathbb{P}_0^{\bm S} \left( \frac{\sum\limits_{j= 1}^{ n_b}Z_{\bm{\alpha}^*}[j]  }{n_b}  < I_{\bm{\alpha}^*} -\epsilon \right)  
 \nonumber 
  =
 \mathbb{P}_0^{\bm S} \left( \frac{\sum\limits_{j= 1}^{ n_b}Z_{\bm{\alpha}^*}[j]  }{n_b}  < I_{\bm{\alpha}^*} -\epsilon + I_{\bm{S},b} - I_{\bm{S},b} \right)
 \nonumber \\& \leq \mathbb{P}_0^{\bm S} \left( \frac{\sum\limits_{j= 1}^{ n_b}Z_{\bm{\alpha}^*}[j]  }{n_b}  < I_{\bm{S},b} -\epsilon  \right)
\leq   \mathbb{P}_0^{\bm S} \left(\Bigg|  \frac{\sum\limits_{j= 1}^{ n_b}Z_{\bm{\alpha}^*}[j]}{n_b}  - I_{\bm{S},b}\Bigg| > \epsilon \right).
 \end{align}
 Define 
 \begin{align}
 \bar{\sigma}^2 \triangleq \max_{\bm E \, \in \, \mathcal{E}}  \text{Var}_{p_{\bm E}} \left[\log\frac{\overline{p}_{\bm{\alpha}^*}(\bm{X})}{g(\bm{X})}     \right].
 \end{align}
 From eq. \eqref{eq:assumptio_mom}, we have that $ \bar{\sigma}^2 <\infty$. Then, by Chebychev's inequality 
 \begin{align}
  \mathbb{P}_0^{\bm S} \left(\Bigg|   \frac{\sum\limits_{j= 1}^{ n_b}Z_{\bm{\alpha}^*}[j]}{n_b}  - I_{\bm{S},b}\Bigg| > \epsilon \right)  \nonumber
 &\leq \text{Var}_0^{\bm S} \left( \frac{\sum\limits_{j= 1}^{ n_b}Z_{\bm{\alpha}^*}[j]}{n_b}   \right)\frac{1}{\epsilon^2} \nonumber 
 = \frac{1}{\epsilon^2 n_b^2} \sum_{j=1}^{n_b} \text{Var}_{p_{\bm S[j]}} \left(Z_{\bm{\alpha}^*}[j]   \right)
\\&  \leq \frac{\sum_{j=1}^{n_b}\bar{\sigma}^2}{n_b^2 \epsilon^2} = \frac{\bar{\sigma}^2}{n_b \epsilon^2}. 
\label{eq:ineq_2}
 \end{align}
 By using \eqref{eq:initial_eq}, \eqref{eq:ineq_1}, \eqref{eq:mia_akoma_equ} and \eqref{eq:ineq_2} we then have that
 \begin{align}
&\sup_{\bm S } \mathbb{E}_0^{\bm S}\left[\frac{\tau_W(\bm{\alpha}^*, b)}{n_b}\right] \leq 1+\lim_{\xi \rightarrow \infty}\sum_{\zeta=1}^\xi \left[  \frac{\bar{\sigma}^2}{n_b \epsilon^2}  \right]^\zeta.
\end{align}
Let $0<\delta<1$. Since $n_b$ is increasing with $b$, we have that for all $b>B$, where $B$ large enough
 \begin{align}
&\sup_{\bm S } \mathbb{E}_0^{\bm S}\left[\frac{\tau_C}{n_b}\right] \leq  1+\lim_{\xi \rightarrow \infty}\sum_{\zeta=1}^\xi \delta^\zeta = \sum_{\zeta=0}^\infty \delta^\zeta = \frac{1}{1-\delta}
\end{align}
which implies that for all $b>B$
 \begin{align}
 \label{eq:eqeqeqeq}
&\sup_{\bm S } \mathbb{E}_0^{\bm S}\left[\tau_W(\bm{\alpha}^*, b) \right] \leq   \frac{b}{(I_{\bm{\alpha}^*}-\epsilon)(1-\delta)}.
\end{align}
Since \eqref{eq:eqeqeqeq} holds for all $\epsilon >0$ we have that
 \begin{align}
&\sup_{\bm S } \mathbb{E}_0^{\bm S}\left[\tau_W(\bm{\alpha}^*, b)\right] \leq   \frac{b}{I_{\bm{\alpha}^*}(1-\delta)}.
\end{align}
Finally, since $\delta \rightarrow 0$ as $b\rightarrow \infty$ we have that
 \begin{align}
 \label{eq:ineq_3}
&\mathrm{WADD}(\tau_W(\bm{\alpha}^*, b)) = \sup_{\bm S } \mathbb{E}_0^{\bm S}\left[\tau_W(\bm{\alpha}^*, b)\right] \leq   \frac{b}{I_{\bm{\alpha}}^*}(1+o(1))
\end{align}
as $b \rightarrow \infty$.
\end{proof}
\bibliographystyle{IEEEtran}

\bibliography{CC_bibliography}

\end{document}